\documentclass[12pt]{article}

\usepackage[T1]{fontenc}
\usepackage{lmodern}
\usepackage{amsmath,amssymb,amsfonts,amsthm,mathtools}
\usepackage{bm}
\usepackage{tikz}
\usetikzlibrary{arrows.meta,calc,positioning}
\usepackage{enumitem}
\usepackage{hyperref}
\usepackage[noabbrev]{cleveref}
\usepackage{microtype}

\usepackage[utf8]{inputenc}

\usepackage[left=1.05in,right=1.05in,top=1in,bottom=1in]{geometry}
\usepackage{amsmath,amssymb,amsfonts,amsthm,mathtools}
\usepackage{graphicx}
\usepackage{float}
\usepackage{tikz}
\usetikzlibrary{arrows.meta,calc,positioning,decorations.markings}

\hypersetup{
  colorlinks=true,
  linkcolor=blue!50!black,
  citecolor=blue!60!black,
  urlcolor=blue!60!black
}

\numberwithin{equation}{section}

\newcommand{\R}{\mathbb{R}}
\newcommand{\C}{\mathbb{C}}

\numberwithin{equation}{section}
\newtheorem{theorem}{Theorem}[section]
\newtheorem{proposition}[theorem]{Proposition}

\newtheorem{corollary}[theorem]{Corollary}
\theoremstyle{definition}
\newtheorem{definition}[theorem]{Definition}
\newtheorem{remark}[theorem]{Remark}
\newtheorem{example}[theorem]{Example}

\newcommand{\CP}{\mathbb{CP}}

\newcommand{\dd}{\,d}
\newcommand{\abs}[1]{\left\lvert #1 \right\rvert}

\newcommand{\ip}[2]{\left\langle #1,#2 \right\rangle}

\title{Explicit Minimal Surface Models in $\mathbb{R}^5$ via Holomorphic Null Curves}
\author{Magdalena Toda, Erhan Güler}
\date{}

\begin{document}
\maketitle

\begin{abstract}
We study explicit conformal minimal immersions into $\mathbb{R}^5$ obtained from holomorphic null curves in $\mathbb{C}^5$. Although the general correspondence between conformal minimal immersions in $\mathbb{R}^n$ and holomorphic null data in $\mathbb{C}^n$ is classical, our aim here is different. We isolate the five-dimensional case and develop a concrete, self-contained account that emphasizes explicit formulas, integral-free constructions, and coordinate expressions suitable for computation and visualization.

Starting from a Weierstrass-type representation in $\mathbb{R}^5$, we derive a family of conformal minimal immersions depending on a single holomorphic seed function and two real parameters. The resulting formulas allow the immersion and the induced metric to be written in closed form. We then examine polynomial seeds in detail, derive their polar and Cartesian expansions, and discuss the geometric information carried by natural coordinate projections.

We reinterpret the construction in the language of moving frames, the generalized Gauss map, and a local DPW-type scheme. This provides a conceptual bridge between explicit holomorphic formulas and the Cartan-integrable-systems viewpoint. The discussion is local and formula-driven; global questions such as periods, completeness, and embeddedness lie beyond the present scope.

We also briefly clarify why the complex-analytic structure underlying the representation is essential, and why it cannot be replaced by a naive quaternionic formalism, due to the loss of commutativity, holomorphic structure, and compatibility with the null-curve framework.

\end{abstract}

\textbf{Keywords:} $\mathbb{R}^5$; conformal minimal immersions; holomorphic null curves; Weierstrass-type representation; moving frames; generalized Gauss map; DPW-type scheme.\\[2pt]
\textbf{MSC 2020:} Primary: 53A10; Secondary: 30D05, 32H30,  37K25, 53C42.

\tableofcontents

\section{Introduction}

The classical Weierstrass-Enneper representation is one of the most beautiful and useful bridges between complex analysis and differential geometry. In its familiar three-dimensional form, a conformal minimal immersion can be described locally as the real part of a holomorphic null curve in $\mathbb{C}^3$, as developed in the classical treatments of Osserman \cite{Osserman2}, Nitsche \cite{Nitsche}, and earlier foundational work of Osserman \cite{Osserman1}. This representation not only gives a conceptual characterization of minimal surfaces, but also produces explicit parametrizations and a large supply of examples; see also the comprehensive exposition by Dierkes et al. \cite{Dierkes}.

The same general principle extends to higher-dimensional Euclidean spaces. If one replaces $\mathbb{C}^3$ by $\mathbb{C}^n$ and imposes the corresponding quadratic nullity condition, then the real part of a holomorphic primitive yields a conformal minimal immersion into $\mathbb{R}^n$, as shown by Chern and Osserman \cite{ChernOsserman} and Beckenbach \cite{Beckenbach}. Alternative formulations and representation results for prescribed mean curvature surfaces, such as those of Kenmotsu \cite{Kenmotsu}, further illustrate the flexibility of this complex-analytic framework. In hyperbolic space, analogous representation formulas were developed by Bryant \cite{Bryant} and later generalized by Aiyama and Akutagawa \cite{AiyamaAkutagawa}.

From a theoretical point of view this extension is classical. From a practical point of view, however, explicit calculations become increasingly difficult as the ambient dimension grows. The formulas become longer, the geometry less visible, and the connection between abstract theory and concrete examples is harder to maintain.

The purpose of this paper is to focus entirely on the case of $\mathbb{R}^5$. This dimension is especially attractive for a formula-based treatment. It is the first case beyond $\mathbb{R}^4$ in which the codimension is large enough to display several independent normal directions, yet still low enough that one can derive closed formulas, compute induced metrics directly, and write down examples in a form suitable for plotting and geometric discussion.

Our emphasis is therefore not on greater generality, but on clarity and explicitness. We begin with a Weierstrass-type formula adapted to $\mathbb{R}^5$ and then specialize it in such a way that the primitive can be written without any remaining integration. This approach is related to recent integral-free constructions and explicit representation formulas obtained by G\"uler and Toda \cite{TodaGulerRn} and by Toda and G\"uler \cite{TodaGulerR4}. This leads to a family of minimal surfaces determined by a single holomorphic seed function and two real parameters. Once this family is established, the induced metric can be computed in a clean closed form, and the generalized Gauss map becomes completely explicit in the spirit of Hoffman and Osserman \cite{HoffmanOsserman}.

The polynomial case is of particular interest. When the seed is chosen to be a monomial, the resulting coordinates of the immersion involve several different powers of the complex variable, and this produces nontrivial geometry already at a very elementary level. Since a surface in $\mathbb{R}^5$ cannot be visualized directly, we also discuss how coordinate projections can be used systematically to reveal different parts of the geometry.

In the final section, we step back from the explicit formulas and explain how they fit into the language of moving frames and harmonic maps into symmetric spaces, following the viewpoints of Pressley and Segal \cite{PressleySegal} and Dajczer and Tojeiro \cite{DajczerTojeiro}. This leads naturally to a local DPW-type perspective introduced by Dorfmeister, Pedit, and Wu \cite{DPW}. We do not attempt to develop a full global integrable-systems theory here, but it is useful to show that the concrete formulas treated in the first part of the paper sit inside a broader geometric framework.

Throughout the paper the discussion remains local. We do not address global period problems, completeness, embeddedness, or classification issues. Our goal is more modest and more concrete: to provide a readable and computationally useful account of explicit minimal surfaces in $\mathbb{R}^5$.

Finally, we note that the present construction is intrinsically complex-analytic. It may be tempting to seek an analogous formulation using quaternionic variables, especially in view of the higher-dimensional setting. However, such an approach is not compatible with the structure of the representation developed here. The key ingredients of the theory -- namely the holomorphic null curve, the quadratic complex-bilinear nullity condition, and the identity $X_z=\Phi$, $X_{\bar z}=0$ -- all rely essentially on the commutative and holomorphic nature of complex multiplication. These features break down in the quaternionic setting, where noncommutativity and the absence of a comparable primitive theory prevent a direct reformulation. For clarity, we return to this point in a dedicated discussion later in the paper.

\section{A Weierstrass-Type Formula in $\mathbb{R}^5$}

Let $D \subset \mathbb{C}$ be a simply connected domain with complex coordinate
\[
z = u + iv.
\]
Let
\[
f, g_1, g_2, g_3 : D \to \mathbb{C}
\]
be holomorphic functions. We define a holomorphic $\mathbb{C}^5$-valued map
\[
\Phi(z) = (\phi_1(z),\phi_2(z),\phi_3(z),\phi_4(z),\phi_5(z))
\]
by
\begin{align}
\phi_1 &= \frac{1}{2} f \bigl(1 - g_1^2 - g_2^2 - g_3^2\bigr), \label{phi1}\\
\phi_2 &= \frac{i}{2} f \bigl(1 + g_1^2 + g_2^2 + g_3^2\bigr), \label{phi2}\\
\phi_3 &= f g_1, \qquad \phi_4 = f g_2, \qquad \phi_5 = f g_3. \label{phi345}
\end{align}
The first point to verify is that this data is automatically null.

\begin{theorem}\label{thm:null}
The components of $\Phi$ satisfy
\[
\phi_1^2+\phi_2^2+\phi_3^2+\phi_4^2+\phi_5^2=0
\]
identically on $D$.
\end{theorem}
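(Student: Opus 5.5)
The plan is a direct algebraic verification, organized to make the cancellation transparent. Set $S = g_1^2+g_2^2+g_2^2$—more precisely $S = g_1^2+g_2^2+g_3^2$—so that by \eqref{phi1}--\eqref{phi345} we have $\phi_1 = \tfrac12 f(1-S)$ and $\phi_2 = \tfrac{i}{2} f(1+S)$. The three remaining components contribute
\[
\phi_3^2+\phi_4^2+\phi_5^2 = f^2\bigl(g_1^2+g_2^2+g_3^2\bigr) = f^2 S .
\]
So the whole identity reduces to a statement about the first two components alone.

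For those, I will use the factorization $\phi_1^2+\phi_2^2 = (\phi_1+i\phi_2)(\phi_1-i\phi_2)$, which holds because multiplication in $\mathbb{C}$ is commutative. We compute
\[
\phi_1 + i\phi_2 = \tfrac12 f(1-S) - \tfrac12 f(1+S) = -fS, \qquad \phi_1 - i\phi_2 = \tfrac12 f(1-S) + \tfrac12 f(1+S) = f .
\]
Hence $\phi_1^2+\phi_2^2 = -f^2 S$. Adding the contribution of $\phi_3,\phi_4,\phi_5$ gives $-f^2S + f^2S = 0$ pointwise on $D$, which is the claimed identity.

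There is no genuine obstacle; the only point needing care is the sign bookkeeping in $\phi_2^2 = -\tfrac14 f^2(1+S)^2$, which the factorization sidesteps. As a cross-check, I would expand directly: $\tfrac14 f^2\bigl[(1-S)^2-(1+S)^2\bigr] = -f^2 S$. I will also note that only the algebraic form of the components is used, not holomorphicity, and that the argument relies on the complex-bilinear (not Hermitian) quadratic form. This is the commutativity point the introduction flags for the later quaternionic discussion.
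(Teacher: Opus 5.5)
Your argument is correct and is essentially the paper's proof: with $\sigma=g_1^2+g_2^2+g_3^2$, both show that $\phi_1^2+\phi_2^2=-f^2\sigma$ cancels $\phi_3^2+\phi_4^2+\phi_5^2=f^2\sigma$. The only difference is cosmetic --- you obtain $-f^2\sigma$ from the factorization $(\phi_1+i\phi_2)(\phi_1-i\phi_2)=(-f\sigma)\,f$, whereas the paper expands $\tfrac14 f^2\bigl((1-\sigma)^2-(1+\sigma)^2\bigr)$ directly, which is exactly your cross-check.
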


\begin{proof}
Set
\[
\sigma = g_1^2 + g_2^2 + g_3^2.
\]
Then \eqref{phi1}--\eqref{phi345} become
\[
\phi_1 = \frac{1}{2}f(1-\sigma), \qquad
\phi_2 = \frac{i}{2}f(1+\sigma), \qquad
\phi_3 = fg_1, \qquad
\phi_4 = fg_2, \qquad
\phi_5 = fg_3.
\]
Hence
\[
\phi_1^2 = \frac{1}{4}f^2(1-\sigma)^2,
\qquad
\phi_2^2 = -\frac{1}{4}f^2(1+\sigma)^2,
\]
while
\[
\phi_3^2+\phi_4^2+\phi_5^2
= f^2(g_1^2+g_2^2+g_3^2)
= f^2\sigma.
\]
Therefore
\begin{align*}
\phi_1^2+\phi_2^2+\phi_3^2+\phi_4^2+\phi_5^2
&=
\frac{1}{4}f^2\bigl((1-\sigma)^2-(1+\sigma)^2\bigr)+f^2\sigma\\
&=
\frac{1}{4}f^2(-4\sigma)+f^2\sigma\\
&=0.
\end{align*}
This proves the identity.
\end{proof}

Because the components of $\Phi$ are holomorphic, the $\mathbb{C}^5$-valued differential $\Phi(z)\,\mathrm{d}z$ has a primitive on the simply connected domain $D$. We therefore define
\begin{equation}\label{Xdef}
X(z)=2\,\Re \int_{z_0}^{z}\Phi(\zeta)\,\mathrm{d}\zeta,
\end{equation}
where $z_0\in D$ is fixed. The factor $2$ is included so that the metric formulas that follow take a particularly clean form.

\begin{definition}
The map $X:D\to \mathbb{R}^5$ defined by \eqref{Xdef} is called the conformal minimal immersion associated with the holomorphic null data $(f,g_1,g_2,g_3)$, provided its differential does not vanish.
\end{definition}

\begin{theorem}\label{thm:minimal}
The map $X$ defined by \eqref{Xdef} is harmonic and conformal wherever $\Phi\neq 0$. In particular, it is a conformal minimal immersion on its regular set.
\end{theorem}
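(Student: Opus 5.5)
The plan is to work with the complex derivatives of $X$. Write $F(z)=\int_{z_0}^{z}\Phi(\zeta)\,\mathrm{d}\zeta$. Since $\Phi$ is holomorphic on the simply connected domain $D$, $F$ is a well-defined holomorphic $\mathbb{C}^5$-valued map with $F_z=\Phi$ and $F_{\bar z}=0$. Because $X=2\Re F=F+\overline{F}$, the Wirtinger derivatives are
\[
X_z=F_z+\overline{F_{\bar z}}=\Phi, \qquad X_{\bar z}=\overline{\Phi}.
\]
This identity $X_z=\Phi$ is the one the factor $2$ in \eqref{Xdef} was chosen to produce.

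\textbf{Harmonicity.} I will use $\Delta X=4X_{z\bar z}$, where $\Delta=\partial_u^2+\partial_v^2$. Then
\[
X_{z\bar z}=\partial_{\bar z}\Phi=0,
\]
because each $\phi_k$ is holomorphic. So every coordinate of $X$ is harmonic on all of $D$, with no regularity assumption needed.

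\textbf{Conformality.} Using $\partial_z=\tfrac12(\partial_u-i\partial_v)$, we have $X_u=2\Re\Phi$ and $X_v=-2\Im\Phi$. Let $\langle\cdot,\cdot\rangle$ be the complex-bilinear extension of the Euclidean inner product. Then
\[
4\langle X_z,X_z\rangle=|X_u|^2-|X_v|^2-2i\langle X_u,X_v\rangle .
\]
By Theorem~\ref{thm:null}, $\langle X_z,X_z\rangle=\sum_k\phi_k^2=0$, so $|X_u|=|X_v|$ and $\langle X_u,X_v\rangle=0$. Moreover
\[
|X_u|^2+|X_v|^2=4\langle X_z,\overline{X_z}\rangle=4\sum_k|\phi_k|^2,
\]
which gives $|X_u|^2=|X_v|^2=2|\Phi|^2$. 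The induced metric is therefore $2|\Phi|^2(\mathrm{d}u^2+\mathrm{d}v^2)$, with conformal factor $\lambda^2=2|\Phi|^2$ in these normalizations. This factor is positive exactly where $\Phi\neq0$, so $X$ is an immersion precisely on the set $\{\Phi\neq0\}$.

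\textbf{Minimality.} For a conformal immersion, $\Delta X=2\lambda^2\vec H$, where $\vec H$ is the mean curvature vector. I can take this standard identity as known, or derive it quickly by projecting $X_{uu}+X_{vv}$ onto the tangent plane and using the conformality relations, which give $\langle X_{uu}+X_{vv},X_u\rangle=0$ and likewise for $X_v$. Since $\Delta X=0$, we get $\vec H=0$ on the regular set.

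The only point needing care is this last identity and the bookkeeping of the factor $2$ in the metric. Everything else follows directly from holomorphicity of $\Phi$ together with Theorem~\ref{thm:null}.
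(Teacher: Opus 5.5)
Your proof is correct and follows the paper's argument. Like the paper, you write $X=2\Re F$ with $F'=\Phi$, obtain harmonicity from holomorphicity, obtain conformality from $\langle X_z,X_z\rangle=\sum_k\phi_k^2=0$, and conclude that a conformal harmonic immersion is minimal. You also make explicit two points the paper leaves implicit: the conformal factor $2|\Phi|^2$, which shows the regular set is exactly $\{\Phi\neq0\}$, and the identity $\Delta X=2\lambda^2\vec H$ behind that last step.
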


\begin{proof}
Let
\[
F(z)=\int_{z_0}^{z}\Phi(\zeta)\,\mathrm{d}\zeta.
\]
Then $F$ is holomorphic and $X=2\,\Re F$. Each coordinate of $X$ is therefore the real part of a holomorphic function, hence harmonic. Thus $X$ is harmonic.

To prove conformality, we differentiate in complex notation. Since $F_z=\Phi$ and $F_{\bar z}=0$, we obtain
\[
X_z = \Phi,
\qquad
X_{\bar z}=\overline{\Phi}.
\]
The complex bilinear inner product on $\mathbb{C}^5$, then gives
\[
\ip{X_z}{X_z}
=
\sum_{k=1}^5 \phi_k^2
=0
\]
by Theorem \ref{thm:null}. This is exactly the conformality condition. Since a conformal harmonic immersion is minimal, the proof is complete.
\end{proof}

\begin{remark}
The mechanism is the same as in the classical three-dimensional case. Holomorphicity produces harmonicity, and the null condition produces conformality. The essential new feature is that the null vector now has five components, and the last three carry the genuinely higher-codimensional information.
\end{remark}

\section{An Integral-Free Construction from a Single Seed Function}

The representation \eqref{Xdef} is conceptually natural, but it still involves integration. For explicit examples it is often useful to remove the integral entirely and write the immersion directly in terms of a holomorphic seed function and its derivatives.

Let $\Psi$ be holomorphic on $D$. Let $\lambda_1,\lambda_2\in\R$ be fixed real parameters, and define
\[
\Lambda = 1+\lambda_1^2+\lambda_2^2.
\]
We choose the Weierstrass data
\begin{equation}\label{seeddata}
f(z)=\Psi'''(z), \qquad
g_1(z)=z, \qquad
g_2(z)=\lambda_1 z, \qquad
g_3(z)=\lambda_2 z.
\end{equation}
Substituting \eqref{seeddata} into \eqref{phi1}--\eqref{phi345}, we obtain
\begin{align}
\phi_1 &= \frac{1}{2}\Psi'''(z)\bigl(1-\Lambda z^2\bigr), \label{phiseed1}\\
\phi_2 &= \frac{i}{2}\Psi'''(z)\bigl(1+\Lambda z^2\bigr), \label{phiseed2}\\
\phi_3 &= z\Psi'''(z), \qquad
\phi_4 = \lambda_1 z\Psi'''(z), \qquad
\phi_5 = \lambda_2 z\Psi'''(z). \label{phiseed345}
\end{align}

We now seek a primitive of this vector field that can be written without any remaining integral signs. Define $F=(F_1,\dots,F_5):D\to\C^5$ by
\begin{align}
F_1(z) &= \frac{1}{2}(1-\Lambda z^2)\Psi''(z)+\Lambda z\Psi'(z)-\Lambda\Psi(z), \label{F1}\\
F_2(z) &= \frac{i}{2}(1+\Lambda z^2)\Psi''(z)-i\Lambda z\Psi'(z)+i\Lambda\Psi(z), \label{F2}\\
F_3(z) &= z\Psi''(z)-\Psi'(z), \label{F3}\\
F_4(z) &= \lambda_1\bigl(z\Psi''(z)-\Psi'(z)\bigr), \label{F4}\\
F_5(z) &= \lambda_2\bigl(z\Psi''(z)-\Psi'(z)\bigr). \label{F5}
\end{align}

\begin{proposition}\label{prop:Fprime}
The function $F$ satisfies
\[
F'(z)=\Phi(z),
\]
where $\Phi$ is defined by \eqref{phiseed1}--\eqref{phiseed345}.
\end{proposition}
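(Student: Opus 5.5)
The plan is to differentiate each component $F_k$ directly with the product rule and check that everything except the $\Psi'''$ term cancels. Only the identities $(z\Psi'')' = \Psi'' + z\Psi'''$ and $(z^2\Psi'')' = 2z\Psi'' + z^2\Psi'''$ are needed; Theorem \ref{thm:null} plays no role.

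I would start with the three components $F_3,F_4,F_5$, which all have the form $c\,(z\Psi''-\Psi')$ with $c\in\{1,\lambda_1,\lambda_2\}$. Here
\[
\bigl(z\Psi''-\Psi'\bigr)' = \Psi''+z\Psi'''-\Psi'' = z\Psi''',
\]
so $F_3'=z\Psi'''=\phi_3$, $F_4'=\lambda_1 z\Psi'''=\phi_4$ and $F_5'=\lambda_2 z\Psi'''=\phi_5$, matching \eqref{phiseed345}.

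Next, for $F_1$, I would differentiate the three terms separately:
\[
\Bigl(\tfrac12(1-\Lambda z^2)\Psi''\Bigr)' = -\Lambda z\Psi'' + \tfrac12(1-\Lambda z^2)\Psi''',
\qquad
(\Lambda z\Psi')' = \Lambda\Psi'+\Lambda z\Psi'',
\qquad
(-\Lambda\Psi)'=-\Lambda\Psi'.
\]
Adding these, the $\Lambda z\Psi''$ terms cancel and the $\Lambda\Psi'$ terms cancel. What remains is $\tfrac12(1-\Lambda z^2)\Psi'''$, which is $\phi_1$ in \eqref{phiseed1}.

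For $F_2$ the computation is the same with $\Lambda$ replaced by $-\Lambda$ and an overall factor of $i$. The term $\tfrac{i}{2}(1+\Lambda z^2)\Psi''$ differentiates to $i\Lambda z\Psi''+\tfrac{i}{2}(1+\Lambda z^2)\Psi'''$. The remaining terms $-i\Lambda z\Psi'+i\Lambda\Psi$ differentiate to $-i\Lambda z\Psi''$. Summing, the $i\Lambda z\Psi''$ terms cancel and we are left with $\phi_2$.

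No step is genuinely hard. The main risk is bookkeeping of signs and factors of $i$ in $F_2$, so I would organise the calculation as above, grouping terms by the order of the derivative of $\Psi$ that appears, so that the cancellations are visible term by term.
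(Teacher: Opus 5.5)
Your proposal is correct and follows the paper's proof essentially verbatim: you differentiate each component directly, and in $F_1$ and $F_2$ the $\Lambda z\Psi''$ and $\Lambda\Psi'$ terms cancel. Your observation that $F_2$ is $i$ times $F_1$ with $\Lambda$ replaced by $-\Lambda$ is a slightly more precise description of the $F_2$ case than the paper's remark that the computation differs only by the factor $i$.
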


\begin{proof}
We differentiate each component directly. For $F_3$ one has
\[
F_3'(z)=\Psi''(z)+z\Psi'''(z)-\Psi''(z)=z\Psi'''(z)=\phi_3(z).
\]
The formulas for $F_4$ and $F_5$ follow immediately from the same computation.

For $F_1$, we write
\[
F_1(z)=\frac{1}{2}(1-\Lambda z^2)\Psi''(z)+\Lambda z\Psi'(z)-\Lambda\Psi(z).
\]
Differentiating term by term yields
\[
\frac{\dd}{\dd z}\left[\frac{1}{2}(1-\Lambda z^2)\Psi''(z)\right]
=
-\Lambda z\Psi''(z)+\frac{1}{2}(1-\Lambda z^2)\Psi'''(z),
\]
and
\[
\frac{\dd}{\dd z}\bigl(\Lambda z\Psi'(z)-\Lambda\Psi(z)\bigr)
=
\Lambda\Psi'(z)+\Lambda z\Psi''(z)-\Lambda\Psi'(z)
=
\Lambda z\Psi''(z).
\]
The terms involving $\Psi''$ cancel, leaving
\[
F_1'(z)=\frac{1}{2}(1-\Lambda z^2)\Psi'''(z)=\phi_1(z).
\]
The computation for $F_2$ is identical except for the factor $i$, and gives
\[
F_2'(z)=\frac{i}{2}(1+\Lambda z^2)\Psi'''(z)=\phi_2(z).
\]
Thus $F'=\Phi$.
\end{proof}

Combining Proposition \ref{prop:Fprime} with \eqref{Xdef}, we obtain the following integral-free formulation.

\begin{corollary}\label{cor:integralfree}
With the notation above, the immersion is given by
\[
X(z)=2\,\Re F(z).
\]
In particular, the immersion depends only on $\Psi$, $\Psi'$, and $\Psi''$.
\end{corollary}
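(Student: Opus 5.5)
The plan is to compare the two holomorphic maps $F$ (from \eqref{F1}--\eqref{F5}) and $G(z)=\int_{z_0}^{z}\Phi(\zeta)\,\mathrm{d}\zeta$, the primitive used in \eqref{Xdef}. Since $D$ is simply connected, $G$ is well defined and holomorphic with $G'=\Phi$. Proposition \ref{prop:Fprime} gives $F'=\Phi$ as well. Hence $(F-G)'=0$ on the connected domain $D$, so $F-G\equiv c$ for a constant vector $c=F(z_0)\in\C^5$.

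Taking real parts, \eqref{Xdef} becomes
\[
X(z)=2\,\Re G(z)=2\,\Re F(z)-2\,\Re F(z_0).
\]
So $X$ and $2\,\Re F$ differ only by the constant translation $2\,\Re F(z_0)\in\R^5$. A minimal immersion is defined only up to such a translation, because translation changes neither the induced metric nor the conformality or harmonicity established in Theorem \ref{thm:minimal}. The identity $X=2\,\Re F$ should therefore be read with this normalization. Equivalently, one can choose the base point (or add the constant to the primitive) so that $\Re F(z_0)=0$, or simply \emph{define} $X$ through the primitive $F$, which is legitimate since any primitive of $\Phi\,\mathrm{d}z$ may be used.

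For the second sentence of the statement, I will inspect \eqref{F1}--\eqref{F5} directly. Every component of $F$ is a combination of $\Psi$, $\Psi'$ and $\Psi''$, with coefficients that are polynomials in $z$ involving only the constants $\Lambda$, $\lambda_1$, $\lambda_2$ and $i$. No $\Psi'''$ and no integral appears. Taking $2\,\Re$ keeps this property, so $X$ depends only on $\Psi,\Psi',\Psi''$ (and $z$).

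The only point requiring care is the additive constant; the rest is immediate. I will state that identification up to translation explicitly rather than leave it implicit.
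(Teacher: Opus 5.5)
Your proposal is correct and follows the paper's route, which simply combines Proposition~\ref{prop:Fprime} ($F'=\Phi$) with the definition \eqref{Xdef}; you are more careful than the paper, which silently drops the translation $2\,\Re F(z_0)$ that you rightly make explicit. One small caveat: moving the base point $z_0$ cannot in general make all five components of $\Re F(z_0)$ vanish (five real conditions on two real unknowns), though it happens to work for monomial seeds at $z_0=0$, so rely on absorbing the constant into the primitive or on identification up to translation instead.
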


The practical value of this formulation is immediate. Instead of first constructing the null curve and then integrating it, one can work directly with the seed function. This is especially useful for polynomial or rational examples, where the coordinates of $X$ can be written down explicitly in closed form.

There is also a simple identity that recovers the seed function from the components of $F$.

\begin{proposition}\label{prop:reconstruction}
With the notation of \eqref{F1}--\eqref{F5}, one has
\begin{equation}\label{reconstruction}
\Psi(z)
=
\frac{\Lambda z^2-1}{2\Lambda}F_1(z)
-
\frac{i(\Lambda z^2+1)}{2\Lambda}F_2(z)
-
\frac{z}{\Lambda}\bigl(F_3(z)+\lambda_1F_4(z)+\lambda_2F_5(z)\bigr).
\end{equation}
\end{proposition}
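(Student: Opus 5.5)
The plan is a direct algebraic verification: substitute \eqref{F1}--\eqref{F5} into the right-hand side of \eqref{reconstruction} and collect coefficients of $\Psi''$, $\Psi'$ and $\Psi$ separately. The claim is that these coefficients are $0$, $0$ and $1$. No differentiation is involved, so nothing beyond the definitions of $F_1,\dots,F_5$ and $\Lambda$ is needed.

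\textbf{Step 1: the last three components.} Since $F_4=\lambda_1F_3$ and $F_5=\lambda_2F_3$, we have
\[
F_3+\lambda_1F_4+\lambda_2F_5=(1+\lambda_1^2+\lambda_2^2)F_3=\Lambda\bigl(z\Psi''-\Psi'\bigr).
\]
The factor $\Lambda$ cancels the $1/\Lambda$ in front, so the third term of \eqref{reconstruction} equals
\[
-z\bigl(z\Psi''-\Psi'\bigr)=-z^2\Psi''+z\Psi'.
\]

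\textbf{Step 2: the first two terms.} For the term in $F_1$, the coefficients are:
\begin{itemize}[label={}]
\item on $\Psi''$: $-(\Lambda z^2-1)^2/(4\Lambda)$,
\item on $\Psi'$: $z(\Lambda z^2-1)/2$,
\item on $\Psi$: $-(\Lambda z^2-1)/2$.
\end{itemize}
For the term in $F_2$, the prefactor $-i$ combines with the $i$ in $F_2$ to give $-i\cdot i=1$. Its coefficients are:
\begin{itemize}[label={}]
\item on $\Psi''$: $(\Lambda z^2+1)^2/(4\Lambda)$,
\item on $\Psi'$: $-z(\Lambda z^2+1)/2$,
\item on $\Psi$: $(\Lambda z^2+1)/2$.
\end{itemize}

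\textbf{Step 3: summing.} The $\Psi''$ coefficient is
\[
\frac{(\Lambda z^2+1)^2-(\Lambda z^2-1)^2}{4\Lambda}-z^2=\frac{4\Lambda z^2}{4\Lambda}-z^2=0.
\]
The $\Psi'$ coefficient is
\[
\frac{z}{2}\bigl[(\Lambda z^2-1)-(\Lambda z^2+1)\bigr]+z=-z+z=0.
\]
The $\Psi$ coefficient is
\[
\frac{-(\Lambda z^2-1)+(\Lambda z^2+1)}{2}=1.
\]
Hence the right-hand side of \eqref{reconstruction} equals $\Psi(z)$.

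There is no real obstacle here. The only points needing care are the sign bookkeeping with the factors of $i$ in $F_2$, and remembering to use $\Lambda=1+\lambda_1^2+\lambda_2^2>0$ both to collapse the last three components and to justify dividing by $\Lambda$.
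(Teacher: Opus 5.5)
Your proof is correct and is the same coefficient-collection argument the paper uses: you collapse $F_3+\lambda_1F_4+\lambda_2F_5$ to $\Lambda(z\Psi''-\Psi')$, expand the $F_1$ and $F_2$ terms, and check that the coefficients of $\Psi''$, $\Psi'$, $\Psi$ come out to $0$, $0$, $1$. Your $\Psi''$ bookkeeping is actually more accurate than the paper's: its intermediate simplification to $\frac{2\Lambda^2 z^4+2}{4\Lambda}$ is an arithmetic slip, and the correct value is $\frac{(\Lambda z^2+1)^2-(\Lambda z^2-1)^2}{4\Lambda}=z^2$, exactly as you found, which is what cancels against $-z^2\Psi''$.
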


\begin{proof}
From \eqref{F3}--\eqref{F5} we obtain
\[
F_3+\lambda_1F_4+\lambda_2F_5
=
(1+\lambda_1^2+\lambda_2^2)\bigl(z\Psi''-\Psi'\bigr)
=
\Lambda(z\Psi''-\Psi').
\]
Next, using \eqref{F1} and \eqref{F2}, we compute
\begin{align*}
\frac{\Lambda z^2-1}{2\Lambda}F_1
&=
\frac{\Lambda z^2-1}{4\Lambda}(1-\Lambda z^2)\Psi''
+\frac{\Lambda z^2-1}{2}z\Psi'
-\frac{\Lambda z^2-1}{2}\Psi,\\
-\frac{i(\Lambda z^2+1)}{2\Lambda}F_2
&=
\frac{\Lambda z^2+1}{4\Lambda}(1+\Lambda z^2)\Psi''
-\frac{\Lambda z^2+1}{2}z\Psi'
+\frac{\Lambda z^2+1}{2}\Psi.
\end{align*}
Adding these two expressions, the coefficient of $\Psi''$ becomes
\[
\frac{(\Lambda z^2-1)(1-\Lambda z^2)+(\Lambda z^2+1)(1+\Lambda z^2)}{4\Lambda}
=
\frac{2\Lambda^2 z^4+2}{4\Lambda}
=
\frac{\Lambda z^4+1/\Lambda}{2},
\]
while the coefficient of $\Psi'$ becomes
\[
\frac{\Lambda z^2-1-(\Lambda z^2+1)}{2}z=-z,
\]
and the coefficient of $\Psi$ becomes
\[
-\frac{\Lambda z^2-1}{2}+\frac{\Lambda z^2+1}{2}=1.
\]
Subtracting
\[
\frac{z}{\Lambda}\bigl(F_3+\lambda_1F_4+\lambda_2F_5\bigr)
=
z^2\Psi''-z\Psi'
\]
cancels the remaining $\Psi''$ and $\Psi'$ contributions, leaving exactly $\Psi$. This proves \eqref{reconstruction}.
\end{proof}

\begin{remark}
The reconstruction identity is not needed for the basic construction, but it shows that the seed function is encoded algebraically in the primitive $F$. In that sense the integral-free parametrization does not lose information.
\end{remark}

\section{The Induced Metric and the Generalized Gauss Map}

We now compute the induced metric of the immersion associated with \eqref{seeddata}. Since $X_z=\Phi$, the first fundamental form is
\begin{equation}\label{metricgeneral}
ds^2 = 2\,\ip{X_z}{X_{\bar z}}\,|dz|^2
      = 2\sum_{k=1}^5 \abs{\phi_k}^2 |dz|^2.
\end{equation}
The advantage of the normalization chosen in \eqref{Xdef} is that the factor in front of the sum is exactly $2$.

\begin{theorem}\label{thm:metric}
For the seed construction \eqref{seeddata}, the induced metric is
\begin{equation}\label{metricformula}
ds^2 = \abs{\Psi'''(z)}^2 \bigl(1+\Lambda \abs{z}^2\bigr)^2 |dz|^2.
\end{equation}
\end{theorem}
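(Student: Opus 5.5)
The plan is to substitute the seed data \eqref{phiseed1}--\eqref{phiseed345} into the general metric formula \eqref{metricgeneral} and simplify $\sum_k|\phi_k|^2$ to a perfect square. Since $\Psi'''(z)$ is a common factor of all five components, we have $\sum_k|\phi_k|^2=|\Psi'''|^2\,S(z)$, where
\[
S=\tfrac14\bigl|1-\Lambda z^2\bigr|^2+\tfrac14\bigl|1+\Lambda z^2\bigr|^2+(1+\lambda_1^2+\lambda_2^2)|z|^2 .
\]
The factor $i$ in $\phi_2$ has modulus one and drops out.

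The key step is the parallelogram identity $|a-b|^2+|a+b|^2=2(|a|^2+|b|^2)$ with $a=1$ and $b=\Lambda z^2$. It turns the first two terms into $\tfrac12(1+\Lambda^2|z|^4)$. The last three components contribute $|z|^2(1+\lambda_1^2+\lambda_2^2)=\Lambda|z|^2$, using the definition of $\Lambda$ and the reality of $\lambda_1,\lambda_2$. Hence
\[
S=\tfrac12\bigl(1+2\Lambda|z|^2+\Lambda^2|z|^4\bigr)=\tfrac12\bigl(1+\Lambda|z|^2\bigr)^2 .
\]
Multiplying by the prefactor $2$ in \eqref{metricgeneral} gives exactly \eqref{metricformula}.

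Before this computation, I will briefly justify \eqref{metricgeneral} itself. Write $X_z=\Phi$ and $X_{\bar z}=\overline\Phi$ as in the proof of Theorem~\ref{thm:minimal}, and use $X_u=X_z+X_{\bar z}$ and $X_v=i(X_z-X_{\bar z})$. Conformality, which comes from Theorem~\ref{thm:null}, gives $\langle X_u,X_v\rangle=0$ and $|X_u|^2=|X_v|^2=2\sum|\phi_k|^2$. I expect no real obstacle. The only point needing care is keeping the factor-of-two normalization from \eqref{Xdef} consistent with \eqref{metricgeneral}, which the paper has already fixed.
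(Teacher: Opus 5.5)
Your proposal is correct and matches the paper's proof essentially line for line: you factor out $|\Psi'''|^2$, apply the parallelogram identity to the first two components to get $\tfrac12(1+\Lambda^2|z|^4)$, absorb the last three components into $\Lambda|z|^2$, complete the square, and multiply by $2$. Your preliminary derivation of \eqref{metricgeneral} from $X_u=X_z+X_{\bar z}$, $X_v=i(X_z-X_{\bar z})$ and conformality is also correct; the paper simply takes that formula as given.
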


\begin{proof}
Using \eqref{phiseed1}--\eqref{phiseed345}, we obtain
\begin{align*}
\sum_{k=1}^5 \abs{\phi_k}^2
&=
\frac{1}{4}\abs{\Psi'''}^2 \abs{1-\Lambda z^2}^2
+
\frac{1}{4}\abs{\Psi'''}^2 \abs{1+\Lambda z^2}^2
+
\abs{\Psi'''}^2 \abs{z}^2 \\
&\qquad
+
\lambda_1^2\abs{\Psi'''}^2 \abs{z}^2
+
\lambda_2^2\abs{\Psi'''}^2 \abs{z}^2.
\end{align*}
Since $\Lambda$ is real, we use the identity
\[
\abs{1-\Lambda z^2}^2+\abs{1+\Lambda z^2}^2
=
2\bigl(1+\Lambda^2\abs{z}^4\bigr).
\]
Hence
\[
\sum_{k=1}^5 \abs{\phi_k}^2
=
\frac{1}{2}\abs{\Psi'''}^2\bigl(1+\Lambda^2\abs{z}^4\bigr)
+
(1+\lambda_1^2+\lambda_2^2)\abs{\Psi'''}^2\abs{z}^2.
\]
Because $\Lambda=1+\lambda_1^2+\lambda_2^2$, this becomes
\[
\sum_{k=1}^5 \abs{\phi_k}^2
=
\frac{1}{2}\abs{\Psi'''}^2\bigl(1+2\Lambda\abs{z}^2+\Lambda^2\abs{z}^4\bigr)
=
\frac{1}{2}\abs{\Psi'''}^2 \bigl(1+\Lambda\abs{z}^2\bigr)^2.
\]
Substituting into \eqref{metricgeneral} yields
\[
ds^2
=
2\cdot \frac{1}{2}\abs{\Psi'''}^2 \bigl(1+\Lambda\abs{z}^2\bigr)^2 |dz|^2,
\]
which is exactly \eqref{metricformula}.
\end{proof}

\begin{remark}
The metric factorizes into two geometrically meaningful pieces. The term $\abs{\Psi'''(z)}^2$ is determined entirely by the seed function. The term $\bigl(1+\Lambda\abs{z}^2\bigr)^2$ records the effect of the three functions $g_1,g_2,g_3$, and hence the way the immersion is distributed among the higher-codimensional directions.
\end{remark}

The null curve also determines a generalized Gauss map. Since $\Phi(z)\neq 0$ on the regular set and satisfies the quadratic relation of Theorem \ref{thm:null}, the projective class $[\Phi(z)]$ lies in the smooth complex quadric
\[
Q_3
=
\left\{
[w_1:w_2:w_3:w_4:w_5]\in\CP^4
\; ; \;
w_1^2+w_2^2+w_3^2+w_4^2+w_5^2=0
\right\}.
\]
We therefore define
\begin{equation}\label{Gdef}
\mathcal{G}(z)
=
[\phi_1(z):\phi_2(z):\phi_3(z):\phi_4(z):\phi_5(z)]
\in Q_3.
\end{equation}

\begin{proposition}\label{prop:gaussmap}
The map $\mathcal{G}:D\to Q_3$ is holomorphic on the regular set of the immersion.
\end{proposition}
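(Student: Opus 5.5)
To prove Proposition~\ref{prop:gaussmap}, I will show that $\mathcal{G}$ is locally the composition of a holomorphic nowhere-vanishing map into $\C^5\setminus\{0\}$ with the canonical projection $\pi:\C^5\setminus\{0\}\to\CP^4$. Since $\pi$ is holomorphic, this immediately gives holomorphicity of $\mathcal{G}$ as a map into $\CP^4$. Because $\Phi$ takes values in the cone over $Q_3$ by Theorem~\ref{thm:null}, $\mathcal{G}$ lands in $Q_3$. Since $Q_3$ is a closed complex submanifold of $\CP^4$, a holomorphic map into $\CP^4$ with image in $Q_3$ is holomorphic as a map into $Q_3$.

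The key steps, in order, are as follows.

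First, I note that $\Phi=(\phi_1,\dots,\phi_5)$ is holomorphic on $D$, since each $\phi_k$ is built from the holomorphic functions $f,g_1,g_2,g_3$ by sums and products. In the seed case \eqref{seeddata}, the components are polynomials in $z$ times $\Psi'''$.

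Second, I identify the regular set. By \eqref{metricgeneral}, the metric vanishes exactly where $\sum_k|\phi_k|^2=0$, that is, where $\Phi=0$. So on the regular set $\Phi(z)\neq0$, and $[\Phi(z)]$ is well defined.

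Third, I verify holomorphicity in affine charts. Near a regular point $z_1$ I choose an index $j$ with $\phi_j(z_1)\neq 0$. On a neighborhood where $\phi_j\neq0$, the chart $U_j=\{w_j\neq0\}$ of $\CP^4$ gives $\mathcal{G}$ the coordinates $\phi_k/\phi_j$ for $k\neq j$. These are quotients of holomorphic functions with nonvanishing denominator, hence holomorphic. The charts cover the regular set, which proves the claim.

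There is no real obstacle here. The only point needing care is that the vanishing locus of $\Phi$ coincides with the singular set of $X$, so that the projective class is defined precisely on the regular set. This follows from \eqref{metricgeneral}, or in the seed case from Theorem~\ref{thm:metric}: the factor $(1+\Lambda|z|^2)^2$ never vanishes, so the regular set is $\{\Psi'''\neq0\}$.

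In the seed case one can also remark that, dividing by the common factor $\Psi'''$, the map
\[
\mathcal{G}(z)=\Bigl[\tfrac12(1-\Lambda z^2):\tfrac i2(1+\Lambda z^2):z:\lambda_1z:\lambda_2z\Bigr]
\]
extends holomorphically across the zeros of $\Psi'''$. This is optional and not needed for the statement.
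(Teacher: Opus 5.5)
Your proof is correct and follows the same route as the paper: $\Phi$ is holomorphic, its projectivization is holomorphic away from common zeros (which you make explicit through the affine-chart quotients $\phi_k/\phi_j$), and the null relation of Theorem~\ref{thm:null} places the image in $Q_3$. You also fill in details the paper leaves implicit: you use \eqref{metricgeneral} to identify the regular set with $\{\Phi\neq 0\}$, and you note that $Q_3$ is a closed complex submanifold of $\CP^4$.
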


\begin{proof}
Each $\phi_k$ is holomorphic, and the null relation shows that the image lies in the quadric. Since projectivization preserves holomorphicity away from common zeros, the map $\mathcal{G}$ is holomorphic on the regular set.
\end{proof}

In the explicit family \eqref{seeddata}, the common factor $\Psi'''(z)$ cancels after projectivization, so the generalized Gauss map depends only on $g_1,g_2,g_3$. More precisely, \eqref{phiseed1}--\eqref{phiseed345} give
\begin{equation}\label{GaussExplicit}
\mathcal{G}(z)
=
\left[
1-\Lambda z^2 :
i(1+\Lambda z^2) :
2z :
2\lambda_1 z :
2\lambda_2 z
\right].
\end{equation}
Thus in the present family the generalized Gauss map is especially simple: it is determined entirely by the complex coordinate and the two real parameters.

\begin{remark}
The fact that the factor $\Psi'''$ disappears from \eqref{GaussExplicit} is important conceptually. The seed function controls the conformal factor of the metric and hence the scaling of the immersion, while the generalized Gauss map records only the projective direction of the null curve.
\end{remark}

\section{Polynomial Seeds and Explicit Coordinate Formulas}

We now study a natural and important class of examples. Let
\[
\Psi(z)=z^m,
\qquad m\geq 3.
\]
Then
\[
\Psi'(z)=m z^{m-1},
\qquad
\Psi''(z)=m(m-1)z^{m-2},
\qquad
\Psi'''(z)=m(m-1)(m-2)z^{m-3}.
\]
For convenience we set
\[
C_m = m(m-1)(m-2).
\]

Substituting these formulas into \eqref{F1}--\eqref{F5}, we obtain explicit polynomial expressions for the primitive $F$ and hence for the immersion $X=2\,\Re F$.

\begin{theorem}\label{thm:poly}
Let $\Psi(z)=z^m$ with $m\geq 3$. Then the primitive $F$ defined by \eqref{F1}--\eqref{F5} has components
\begin{align}
F_1(z)
&=
\frac{m(m-1)}{2}z^{m-2}
-
\frac{\Lambda}{2}(m-1)(m-2)z^m, \label{polyF1}\\
F_2(z)
&=
\frac{i\,m(m-1)}{2}z^{m-2}
+
\frac{i\Lambda}{2}(m-1)(m-2)z^m, \label{polyF2}\\
F_3(z)
&=
m(m-2)z^{m-1}, \label{polyF3}\\
F_4(z)
&=
\lambda_1 m(m-2)z^{m-1}, \label{polyF4}\\
F_5(z)
&=
\lambda_2 m(m-2)z^{m-1}. \label{polyF5}
\end{align}
Consequently, the immersion $X=2\,\Re F$ has induced metric
\begin{equation}\label{polymetric}
ds^2 = C_m^2 \abs{z}^{2m-6}\bigl(1+\Lambda\abs{z}^2\bigr)^2 |dz|^2.
\end{equation}
\end{theorem}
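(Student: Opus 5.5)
The proof is direct substitution: the polynomial formulas come from inserting the derivatives of $\Psi(z)=z^m$ into \eqref{F1}--\eqref{F5}, and the metric comes from Theorem~\ref{thm:metric}. No new structural argument is needed.

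We first insert $\Psi=z^m$, $\Psi'=mz^{m-1}$, $\Psi''=m(m-1)z^{m-2}$ into each component. The easiest are $F_3,F_4,F_5$. By \eqref{F3},
\[
z\Psi''-\Psi'=m(m-1)z^{m-1}-mz^{m-1}=m(m-2)z^{m-1},
\]
which gives \eqref{polyF3}. Multiplying by $\lambda_1$ and $\lambda_2$ gives \eqref{polyF4} and \eqref{polyF5}.

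For $F_1$ we expand \eqref{F1} and collect powers of $z$. The $z^{m-2}$ term is $\tfrac12 m(m-1)z^{m-2}$. The $z^m$ coefficient is
\[
\Lambda\Bigl(-\tfrac12 m(m-1)+m-1\Bigr)=-\tfrac{\Lambda}{2}(m-1)(m-2),
\]
which is \eqref{polyF1}. For $F_2$ the signs of the $\Lambda$-terms in \eqref{F2} are reversed and there is an overall factor $i$. The same bookkeeping gives $+\tfrac{i\Lambda}{2}(m-1)(m-2)z^m$ together with $\tfrac{i\,m(m-1)}{2}z^{m-2}$, which is \eqref{polyF2}.

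As an independent check, one may differentiate these polynomials and compare with \eqref{phiseed1}--\eqref{phiseed345}, as in Proposition~\ref{prop:Fprime}. For instance,
\[
\frac{d}{dz}F_1=\tfrac12 C_m z^{m-3}-\tfrac12\Lambda\, C_m z^{m-1}=\tfrac12 C_m z^{m-3}\bigl(1-\Lambda z^2\bigr),
\]
which matches $\phi_1=\tfrac12\Psi'''(1-\Lambda z^2)$. The same check works for $F_2$ and for $F_3,F_4,F_5$.

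For the metric, we invoke Theorem~\ref{thm:metric} with $\Psi'''(z)=C_m z^{m-3}$. Then $|\Psi'''|^2=C_m^2|z|^{2m-6}$, and substituting into \eqref{metricformula} gives \eqref{polymetric} immediately.

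There is no genuine obstacle. The only point needing care is the $z^m$ coefficient in $F_1$ and $F_2$, where the three contributions have to combine correctly into $(m-1)(m-2)$. It is also worth noting that at $z=0$ the metric \eqref{polymetric} degenerates when $m>3$. The formula is still correct as stated, but $X$ is an immersion only on the regular set $z\neq0$, consistent with Theorem~\ref{thm:minimal}.
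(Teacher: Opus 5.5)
Your proof is correct and takes essentially the same route as the paper. You substitute $\Psi'$ and $\Psi''$ into \eqref{F1}--\eqref{F5}, collect the three $z^m$ contributions in $F_1$ and $F_2$ into the factor $(m-1)(m-2)$, and read off the metric from Theorem~\ref{thm:metric} using $|\Psi'''|^2=C_m^2|z|^{2m-6}$; the derivative check against \eqref{phiseed1}--\eqref{phiseed345} and the remark about degeneracy at $z=0$ for $m>3$ are sound extras that the paper's proof does not include.
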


\begin{proof}
We compute the components one by one. For $F_3$,
\[
F_3(z)=z\Psi''(z)-\Psi'(z)
=
z\cdot m(m-1)z^{m-2} - m z^{m-1}
=
m(m-2)z^{m-1},
\]
which proves \eqref{polyF3}. The formulas \eqref{polyF4} and \eqref{polyF5} follow immediately by multiplying by $\lambda_1$ and $\lambda_2$.

For $F_1$, substituting $\Psi''(z)=m(m-1)z^{m-2}$ and $\Psi'(z)=m z^{m-1}$ into \eqref{F1} gives
\[
F_1(z)
=
\frac{1}{2}(1-\Lambda z^2)m(m-1)z^{m-2}
+\Lambda z\cdot m z^{m-1}
-\Lambda z^m.
\]
Expanding,
\[
F_1(z)
=
\frac{m(m-1)}{2}z^{m-2}
-\frac{\Lambda m(m-1)}{2}z^m
+\Lambda m z^m
-\Lambda z^m.
\]
The coefficient of $z^m$ simplifies to
\[
-\frac{\Lambda m(m-1)}{2}+\Lambda m-\Lambda
=
-\frac{\Lambda}{2}(m-1)(m-2),
\]
which proves \eqref{polyF1}.

The formula for $F_2$ is obtained similarly from \eqref{F2}:
\[
F_2(z)
=
\frac{i}{2}(1+\Lambda z^2)m(m-1)z^{m-2}
-i\Lambda z\cdot m z^{m-1}
+i\Lambda z^m,
\]
and simplifying the coefficient of $z^m$ gives \eqref{polyF2}.

Finally, \eqref{polymetric} follows from \eqref{metricformula} and the fact that
\[
\abs{\Psi'''(z)}^2 = C_m^2 \abs{z}^{2m-6}.
\]
\end{proof}

\begin{remark}
The structure of \eqref{polyF1}--\eqref{polyF5} is already geometrically suggestive. The first two coordinates involve two different degrees, namely $m-2$ and $m$, while the last three involve only degree $m-1$. This mixture of degrees is one of the reasons the projected images can be more interesting than one might expect from such a simple seed.
\end{remark}

To make the geometry more visible, it is helpful to write the immersion in polar coordinates. Let
\[
z = r e^{i\theta},
\qquad r\geq 0.
\]
Then
\[
z^{m-2}=r^{m-2}e^{i(m-2)\theta},
\qquad
z^{m-1}=r^{m-1}e^{i(m-1)\theta},
\qquad
z^m=r^m e^{im\theta}.
\]
Since $X=2\,\Re F$, Theorem \ref{thm:poly} gives the following formulas.

\begin{proposition}\label{prop:polar}
For the polynomial seed $\Psi(z)=z^m$, the immersion $X=(X_1,\dots,X_5)$ is given in polar coordinates by
\begin{align}
X_1(r,\theta)
&=
m(m-1)r^{m-2}\cos((m-2)\theta)
-
\Lambda (m-1)(m-2)r^m\cos(m\theta), \label{X1polar}\\
X_2(r,\theta)
&=
- m(m-1)r^{m-2}\sin((m-2)\theta)
-
\Lambda (m-1)(m-2)r^m\sin(m\theta), \label{X2polar}\\
X_3(r,\theta)
&=
2m(m-2)r^{m-1}\cos((m-1)\theta), \label{X3polar}\\
X_4(r,\theta)
&=
2\lambda_1 m(m-2)r^{m-1}\cos((m-1)\theta), \label{X4polar}\\
X_5(r,\theta)
&=
2\lambda_2 m(m-2)r^{m-1}\cos((m-1)\theta). \label{X5polar}
\end{align}
\end{proposition}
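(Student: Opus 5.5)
The plan is to read the formulas off directly from Corollary \ref{cor:integralfree} and Theorem \ref{thm:poly}. Corollary \ref{cor:integralfree} gives $X=2\,\Re F$. Strictly, \eqref{Xdef} differs from this by the constant vector $2\,\Re F(z_0)$, which is a translation in $\mathbb{R}^5$, so we adopt the normalization of the corollary. Theorem \ref{thm:poly} then expresses each $F_k$ as a complex linear combination of the monomials $z^{m-2}$, $z^{m-1}$, $z^m$. All coefficients are real, possibly multiplied by $i$, because $\Lambda,\lambda_1,\lambda_2\in\R$.

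The only tool needed is the pair of elementary identities, valid for $z=re^{i\theta}$, real $c$ and integer $k\ge 0$:
\[
\Re\bigl(c\,z^k\bigr)=c\,r^k\cos(k\theta),
\qquad
\Re\bigl(i c\,z^k\bigr)=-c\,r^k\sin(k\theta).
\]
These follow from $z^k=r^k(\cos k\theta+i\sin k\theta)$. The proof applies them term by term, doubling each result.

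For $X_1$, apply the first identity to \eqref{polyF1} with $k=m-2$ and $k=m$. The factor $2$ cancels the halves, giving \eqref{X1polar}.

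For $X_2$, the coefficients in \eqref{polyF2} are $i$ times the reals $\tfrac{m(m-1)}{2}$ and $\tfrac{\Lambda(m-1)(m-2)}{2}$. The second identity then produces two sine terms, each with a minus sign. This sign bookkeeping is the only point needing care: both terms of $X_2$ carry a minus sign. This differs from $X_1$, where the $z^m$ term enters with its original minus sign. The result is \eqref{X2polar}.

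For $X_3$, $X_4$, $X_5$, apply the first identity with $k=m-1$ to \eqref{polyF3}--\eqref{polyF5}. This gives $2m(m-2)r^{m-1}\cos((m-1)\theta)$, multiplied by $1$, $\lambda_1$, $\lambda_2$ respectively, which is \eqref{X3polar}--\eqref{X5polar}.

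There is no real obstacle here. The main risk is a sign or factor-of-two slip in the imaginary-coefficient component $X_2$, so I would double-check it by also expanding $F_2$ directly in $u,v$ for a small case such as $m=3$.
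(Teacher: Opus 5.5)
Your proposal is correct and follows the paper's own proof: you take $2\,\Re$ of each component in \eqref{polyF1}--\eqref{polyF5} term by term, and your sign bookkeeping for $X_2$ matches \eqref{X2polar}. Your remark that \eqref{Xdef} differs from $2\,\Re F$ by a constant translation is a correct refinement that the paper leaves implicit in Corollary \ref{cor:integralfree}.
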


\begin{proof}
Each formula follows by taking twice the real part of the corresponding component in \eqref{polyF1}--\eqref{polyF5}. For instance,
\[
X_1=2\,\Re F_1
=
2\left[
\frac{m(m-1)}{2}r^{m-2}\cos((m-2)\theta)
-
\frac{\Lambda}{2}(m-1)(m-2)r^m\cos(m\theta)
\right],
\]
which simplifies to \eqref{X1polar}. The remaining cases are identical.
\end{proof}

\begin{example}[The quartic seed]\label{ex:quartic}
Take $\Psi(z)=z^4$. Then $m=4$, $C_4=24$, and Theorem \ref{thm:poly} gives
\[
F_1(z)=6z^2-\Lambda z^4,
\qquad
F_2(z)=6iz^2+i\Lambda z^4,
\qquad
F_3(z)=8z^3,
\]
together with
\[
F_4(z)=8\lambda_1 z^3,
\qquad
F_5(z)=8\lambda_2 z^3.
\]
Hence
\begin{align*}
X_1(r,\theta) &= 12r^2\cos(2\theta)-2\Lambda r^4\cos(4\theta),\\
X_2(r,\theta) &= -12r^2\sin(2\theta)-2\Lambda r^4\sin(4\theta),\\
X_3(r,\theta) &= 16r^3\cos(3\theta),\\
X_4(r,\theta) &= 16\lambda_1 r^3\cos(3\theta),\\
X_5(r,\theta) &= 16\lambda_2 r^3\cos(3\theta).
\end{align*}
The induced metric is
\[
ds^2 = 24^2 \abs{z}^2 (1+\Lambda\abs{z}^2)^2 |dz|^2.
\]
\end{example}

It is also useful to record Cartesian expansions for the quartic seed. Writing $z=u+iv$, one has
\[
z^2=(u^2-v^2)+2iuv,
\]
\[
z^3=(u^3-3uv^2)+i(3u^2v-v^3),
\]
and
\[
z^4=(u^4-6u^2v^2+v^4)+i(4u^3v-4uv^3).
\]
Substituting these into Example \ref{ex:quartic}, we obtain
\begin{align}
X_1(u,v)
&=
12(u^2-v^2)-2\Lambda(u^4-6u^2v^2+v^4), \label{X1cart}\\
X_2(u,v)
&=
-24uv-8\Lambda(u^3v-uv^3), \label{X2cart}\\
X_3(u,v)
&=
16(u^3-3uv^2), \label{X3cart}\\
X_4(u,v)
&=
16\lambda_1(u^3-3uv^2), \label{X4cart}\\
X_5(u,v)
&=
16\lambda_2(u^3-3uv^2). \label{X5cart}
\end{align}

\begin{remark}
In this family the last three coordinates are proportional. This reflects the special choice $g_2=\lambda_1 z$ and $g_3=\lambda_2 z$. More general choices of $g_1,g_2,g_3$ would produce greater variation among the final three coordinates, although at the cost of more complicated formulas.
\end{remark}

As shown in Example~\ref{ex:quartic}, the orthogonal projections onto the $X_1X_2X_3$-space of the polar (left) and Cartesian (right) surfaces in $\mathbb{R}^5$ are presented in Figure~\ref{fig:proj}.

\begin{figure}
    \centering
    \includegraphics[width=1\linewidth]{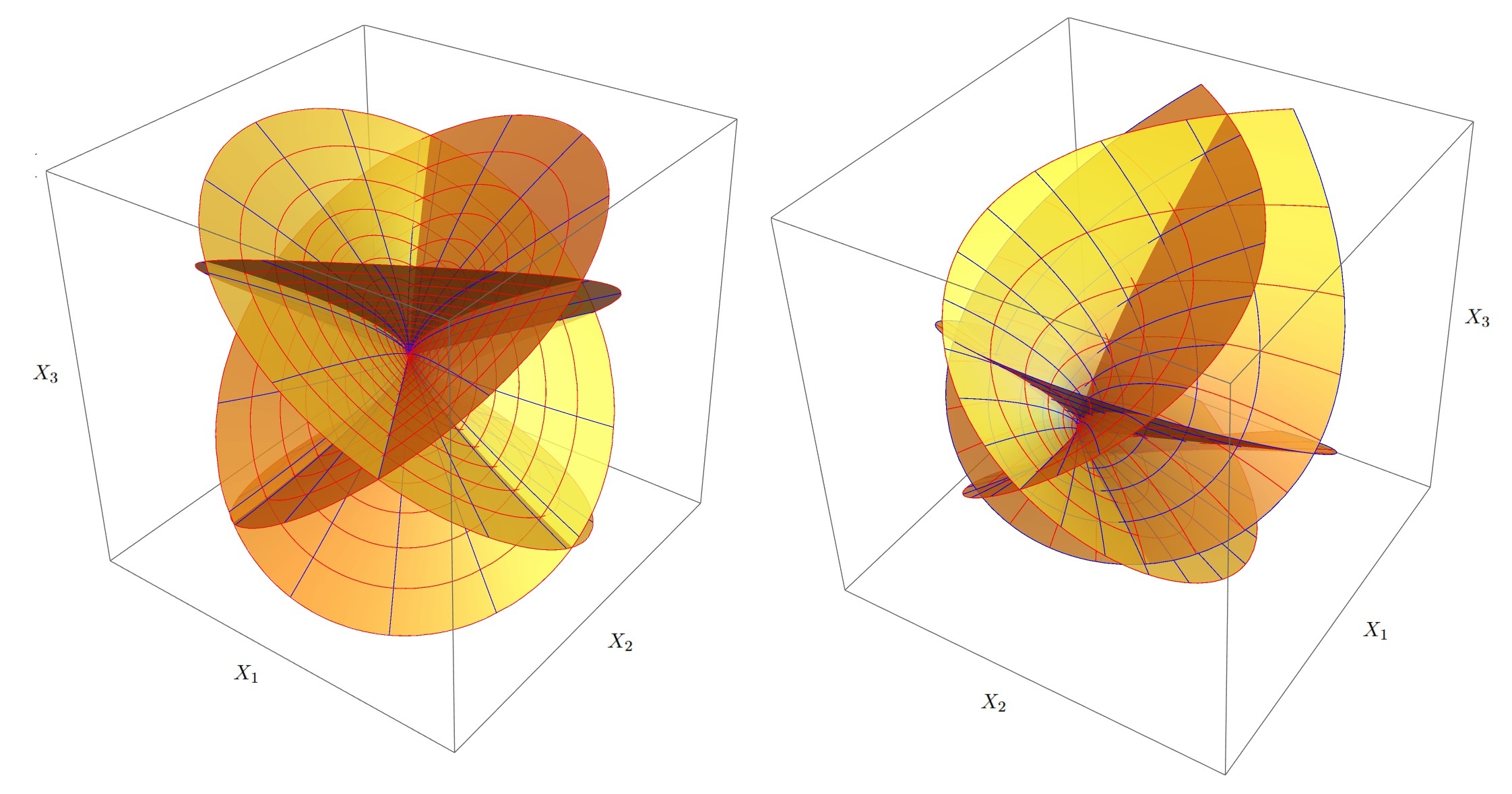}
    \caption{Orthogonal projections onto the $X_1X_2X_3$-space of the polar (left) and Cartesian (right) surfaces in $\mathbb{R}^5$, corresponding to Example~\ref{ex:quartic}, where $\lambda_1=3$ and $\lambda_2=5$.}
    \label{fig:proj}
\end{figure}

\section{Projection Geometry and Visualization}

Since the image of the immersion lies in $\mathbb{R}^5$, it cannot be visualized directly. One must therefore study coordinate projections. Although a projection does not preserve all geometric information, it often reveals patterns that are otherwise hidden.

Let
\[
X=(X_1,X_2,X_3,X_4,X_5):D\to \mathbb{R}^5
\]
be one of the minimal immersions constructed above. For each pair of indices $1\leq i<j\leq 5$, the map
\[
(X_i,X_j):D\to\mathbb{R}^2
\]
defines a planar projection. Since there are $\binom{5}{2}=10$ such pairs, there are ten canonical planar projections. Similarly, for each triple $1\leq i<j<k\leq 5$, the map
\[
(X_i,X_j,X_k):D\to\R^3
\]
defines a three-dimensional projection surface, and again there are $\binom{5}{3}=10$ such triples.

The planar projection $(X_1,X_2)$ is particularly natural. These two coordinates come from the first complex pair in the Weierstrass data and therefore retain the most direct trace of the oscillatory behavior inherited from the powers of $z$. When the seed is polynomial, the projection $(X_1,X_2)$ mixes the frequencies $(m-2)\theta$ and $m\theta$, and this can produce multi-lobed or rotating patterns even before one looks at the higher-codimensional directions.

The coordinates $(X_3,X_4,X_5)$ play a different role. They show how the surface spreads among the three directions beyond the first complex pair. In our specific family they are proportional, so their image lies in a two-dimensional affine subspace of $\R^3$. Nevertheless, when these coordinates are combined with $X_1$ or $X_2$ in mixed projections such as $(X_1,X_3,X_4)$ or $(X_2,X_4,X_5)$, one obtains a more informative picture of the ambient five-dimensional geometry.

For practical plotting, the polar formulas \eqref{X1polar}--\eqref{X5polar} are often the most convenient. One fixes a bounded radial interval $0\leq r\leq R$ and samples the angles uniformly. The quartic seed of Example \ref{ex:quartic} is especially suitable because the frequencies $2\theta$, $3\theta$, and $4\theta$ are low enough to produce visually rich but still readable patterns.

It is important to emphasize, however, that these projections are only visualization devices. In general, projecting a minimal surface from $\mathbb{R}^5$ to a lower-dimensional Euclidean space does not preserve minimality. The mean curvature vector depends on the ambient second fundamental form, and this changes under projection.

The following simple observation explains the one important exception.

\begin{proposition}\label{prop:projectionminimal}
Let $X:D\to\mathbb{R}^n$ be a conformal minimal immersion whose image is contained in an affine subspace $A\subset\mathbb{R}^n$ of dimension three. Then, after identifying $A$ isometrically with $\R^3$, the image is a minimal surface in $\R^3$.
\end{proposition}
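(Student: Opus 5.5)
Write $A=p+V$, where $V$ is a three-dimensional linear subspace of $\mathbb{R}^n$. Fix an affine isometry $\iota:\mathbb{R}^3\to A$, $\iota(y)=p+Ly$, with $L:\mathbb{R}^3\to V$ a linear isometry. The plan is to show that the map $Y=\iota^{-1}\circ X = L^{T}(X-p):D\to\mathbb{R}^3$ is again conformal and harmonic. Then minimality follows exactly as in the proof of Theorem \ref{thm:minimal}, since a conformal harmonic immersion is minimal. Equivalently, one can run the argument intrinsically: the mean curvature vector of $X$ in $\mathbb{R}^n$ is tangent to $A$ and coincides with the mean curvature vector of the surface viewed inside $A$.

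\textbf{Step 1: harmonicity.} Since $X(D)\subset A$, all partial derivatives of $X$ take values in $V$. Hence $X-p = L L^{T}(X-p) = LY$, and $Y$ is obtained from $X$ by a constant affine map. Affine maps commute with the Laplacian, so $\Delta Y = L^{T}\Delta X = 0$, because $X$ is harmonic. (For a conformal minimal immersion, harmonicity in conformal coordinates is the standard identity $\Delta X = 2\lambda^2 H$ with $H=0$.)

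\textbf{Step 2: conformality and regularity.} Since $L$ is an isometry on $V$ and $X_u,X_v\in V$, we get $\langle Y_u,Y_v\rangle=\langle X_u,X_v\rangle$, and likewise for the squared norms. So the induced metric of $Y$ equals that of $X$. Thus $Y$ is conformal, and it is an immersion wherever $X$ is. In the complex notation of Theorem \ref{thm:minimal}, $\langle Y_z,Y_z\rangle=\langle X_z,X_z\rangle=0$, where the complex-bilinear extension of $L$ preserves the complex-bilinear form on $V\otimes\mathbb{C}$.

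\textbf{Step 3: conclusion.} $Y$ is a conformal harmonic immersion into $\mathbb{R}^3$, hence minimal. Its image is $\iota^{-1}(X(D))$, which is precisely the identified image.

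The only step needing care is Step 1. There one must observe that the ambient Laplacian of $X$ automatically lies in $V$, so that no information is lost under $L^{T}$. This holds simply because $X-p$ is $V$-valued, so every derivative is $V$-valued as well. No genuine obstacle is expected beyond this point.
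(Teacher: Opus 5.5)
Your argument is correct, but it takes a different route from the paper's. The paper argues extrinsically. Since $A$ is totally geodesic in $\mathbb{R}^n$, the second fundamental form of the surface regarded inside $A$ coincides with its second fundamental form in $\mathbb{R}^n$. Hence the mean curvature vectors agree, and vanishing in $\mathbb{R}^n$ gives vanishing in $A$; this is the alternative you sketch in one sentence.

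Your main route instead uses the criterion behind Theorem~\ref{thm:minimal}. You transport $X$ to $Y=L^{T}(X-p)$ by an affine isometry and check directly that harmonicity, conformality and the induced metric survive. You then invoke ``conformal $+$ harmonic $\Rightarrow$ minimal'' via $\Delta X=2\lambda^2H$.

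Your version stays within the paper's own framework and reduces everything to linear algebra. It also makes explicit something the paper takes for granted: $Y$ is still an immersion with the same metric. The paper's version is coordinate-free and needs no conformal parametrization. It also applies verbatim to any totally geodesic submanifold of a Riemannian ambient space. Neither argument actually uses $\dim A=3$.

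One small correction to your closing remark. Step~1 needs no care at all: $\Delta Y=L^{T}\Delta X$ holds for any constant linear map, and $\Delta X=0$. The place where $V$-valuedness of the derivatives is genuinely used is Step~2. There $L^{T}$ is an isometry only on $V$; on all of $\mathbb{R}^n$ it is a projection and would distort $\langle X_u,X_v\rangle$ and $|X_u|^2$.
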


\begin{proof}
If the image of $X$ is contained in $A$, then $X$ may be viewed as an immersion into the Euclidean space $A$ itself. Since the inclusion of $A$ into $\mathbb{R}^n$ is totally geodesic and isometric, the second fundamental form of the immersion computed in $A$ agrees with the tangential part of the second fundamental form computed in $\mathbb{R}^n$. Hence vanishing of the mean curvature vector in $\mathbb{R}^n$ implies vanishing of the mean curvature vector in $A$. Identifying $A$ with $\R^3$ gives the conclusion.
\end{proof}

\begin{remark}
Outside such lower-dimensional degeneracies, a projected image should not be interpreted as a minimal surface in its own right. What it preserves is not minimality, but certain visible traces of the higher-dimensional geometry.
\end{remark}

\medskip
In order to better understand the algebraic complexity underlying these projections, it is natural to examine the implicit equations satisfied by the coordinate functions. Indeed, eliminating the parameters from the Cartesian representation leads to a high-degree polynomial relation among the variables $(X_1,X_2,X_3)$.

This mechanism is already illustrated in Example~\ref{ex:quartic}, where eliminating the parameters $u$ and $v$ from the first three coordinate functions yields an explicit algebraic relation among $(X_1,X_2,X_3)$. Motivated by this observation, we are led to consider equations of the form
\[
F(X_1,X_2,X_3)=0,
\]
where $F$ is a multivariate polynomial consisting of monomials of varying degrees.

In general, such polynomials are of high degree and involve a large number of terms. For instance, the leading part of $F$ can be written as
\begin{equation}\label{eq:F}
\begin{aligned}
F &= \Lambda^{12}X_3^{16} 
+ 4096\,\Lambda^{9}X_1^{3}X_3^{12}
- 12288\,\Lambda^{9}X_1X_2^{2}X_3^{12} \\
&\quad 
- 165888\,\Lambda^{9}X_3^{14} 
- 4202496\,\Lambda^{8}X_1^{2}X_3^{12}\\
&\quad + \text{(remaining 73 lower-degree terms)}.
\end{aligned}
\end{equation}

\medskip
Figure~\ref{fig:projectionX123} illustrates the graph of \eqref{eq:F} in the three-dimensional $X_1X_2X_3$-space.

\medskip

The coexistence of high-degree terms and purely spatial monomials reflects the interaction between scaling effects and angular oscillations. From a geometric perspective, the highest-degree terms govern the asymptotic behavior of the projection, whereas the lower-degree terms encode finer structures such as lobes, symmetries, and self-intersections in the projected images.

Although such implicit equations are typically too large to analyze term-by-term, even partial truncations reveal an important feature: the geometry of the projected image is governed by a delicate balance between different frequency components. This explains why relatively simple polynomial seeds can generate highly intricate patterns when visualized through coordinate projections.

\section{Some Elementary Geometric Remarks}

We collect here several simple observations that help place the preceding formulas in context.

The first concerns intrinsic curvature. Although we do not attempt to compute the Gauss curvature explicitly for the general family, it is useful to recall that Gauss curvature is determined entirely by the first fundamental form. In particular, once the metric \eqref{metricformula} is known, the intrinsic curvature is determined independently of the ambient embedding.

\begin{proposition}
The Gauss curvature of the immersed surface is an intrinsic invariant, determined entirely by the metric.
\end{proposition}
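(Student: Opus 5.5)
The plan is to invoke Gauss's Theorema Egregium in the conformal setting, where it reduces to an explicit formula. By Theorem \ref{thm:minimal} the immersion is conformal on its regular set, so the induced metric has the form $ds^2=\rho\,|dz|^2$ with $\rho=2\sum_k|\phi_k|^2>0$, as in \eqref{metricgeneral}. The key steps are as follows.

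\textbf{Step 1 (Gauss equation).} For a surface in $\mathbb{R}^n$ with second fundamental form $\mathrm{II}$ and an orthonormal tangent frame $e_1,e_2$, the Gauss equation gives
\[
K=\ip{\mathrm{II}(e_1,e_1)}{\mathrm{II}(e_2,e_2)}-\abs{\mathrm{II}(e_1,e_2)}^2 .
\]
This expression a priori involves the ambient geometry.

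\textbf{Step 2 (intrinsic formula).} Recall the standard fact that the right-hand side equals the sectional curvature of the Levi-Civita connection of the induced metric. That connection is determined by the Christoffel symbols, which depend only on the metric coefficients and their first derivatives. For a conformal metric $\rho|dz|^2$ this becomes the explicit formula
\[
K=-\frac{1}{2\rho}\,\Delta\log\rho=-\frac{2}{\rho}\,\partial_z\partial_{\bar z}\log\rho,
\]
which I would verify directly from Cartan's structure equations for the coframe $\sqrt{\rho}\,du,\ \sqrt{\rho}\,dv$. Since only $\rho$ appears, $K$ is determined by the metric alone.

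\textbf{Step 3 (the present family).} Applying this to \eqref{metricformula} shows concretely that $K$ depends only on $\abs{\Psi'''}^2(1+\Lambda|z|^2)^2$. On the regular set, $\log\abs{\Psi'''}^2$ is harmonic, so it drops out of $\partial_z\partial_{\bar z}\log\rho$. This leaves
\[
K=-\frac{4\Lambda}{\abs{\Psi'''}^2(1+\Lambda|z|^2)^4},
\]
a nonpositive quantity, as expected for a minimal surface.

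The main obstacle is Step 2. The paper has not set up the Gauss equation in higher codimension, so I would either cite the Theorema Egregium in its $n$-dimensional form or carry out the short structure-equation computation for the conformal coframe. Everything else is routine.
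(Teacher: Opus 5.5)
Your proposal is correct and is essentially the paper's argument: both rest on the Theorema Egregium (in your case via the Gauss equation for surfaces in $\mathbb{R}^n$), namely that the induced metric determines the Levi-Civita connection and hence the curvature. Your conformal formula and Step 3 go beyond what the paper proves, and the computation checks out: with $\rho=\abs{\Psi'''}^2(1+\Lambda\abs{z}^2)^2$ one gets $\partial_z\partial_{\bar z}\log\rho=2\Lambda/(1+\Lambda\abs{z}^2)^2$ on the regular set, hence $K=-4\Lambda/\bigl(\abs{\Psi'''}^2(1+\Lambda\abs{z}^2)^4\bigr)$.
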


\begin{proof}
This is precisely the content of Gauss' \emph{Theorema Egregium}. Since the metric determines the Levi-Civita connection and hence the Riemann curvature tensor of the surface, it determines the Gauss curvature.
\end{proof}

\begin{figure}
    \centering
    \includegraphics[width=0.95\linewidth]{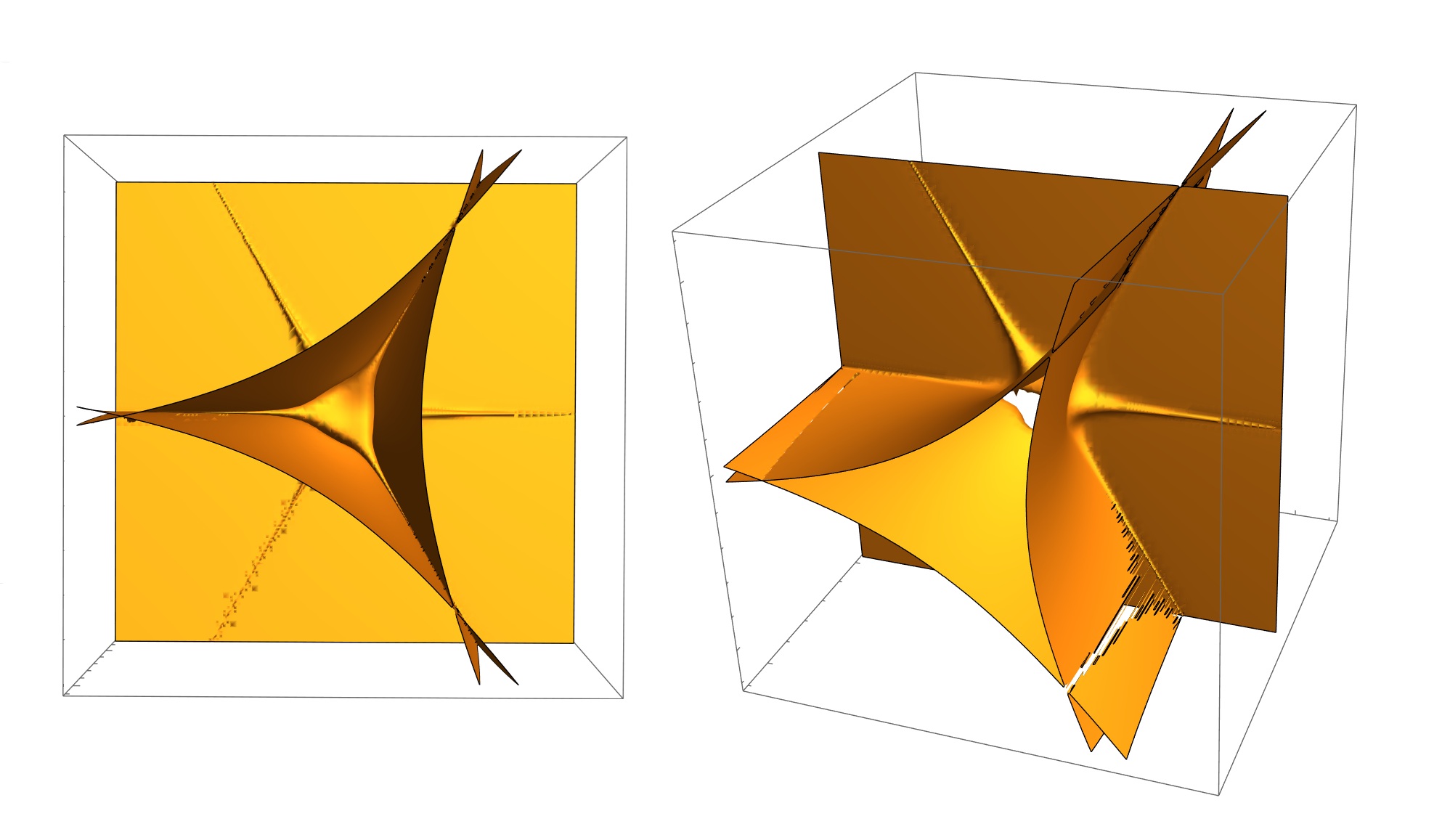}
    \caption{Two views from different angles of the implicit surface given by \eqref{eq:F} in the $X_1X_2X_3$-space.}
    \label{fig:projectionX123}
\end{figure}

Another question is when the construction degenerates to a plane. The answer is completely elementary.

\begin{proposition}\label{prop:plane}
If the holomorphic null data $\Phi$ is constant, then the associated immersion is an affine plane in $\mathbb{R}^5$.
\end{proposition}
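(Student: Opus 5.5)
Write $\Phi(z)\equiv c=a+ib$ with $a,b\in\R^5$. The plan is to integrate the definition \eqref{Xdef} directly, read off the immersion as a real affine map, and then use the null condition to show that this map is a nondegenerate plane.

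\textbf{Step 1: integrate.} Since $\Phi$ is constant, its primitive is
\[
\int_{z_0}^{z}\Phi(\zeta)\,\mathrm{d}\zeta = c\,(z-z_0).
\]
Writing $z-z_0=(u-u_0)+i(v-v_0)$ and taking real parts gives
\[
X(u,v)=2\bigl(a(u-u_0)-b(v-v_0)\bigr).
\]
So $X$ is the restriction of a real affine map $\R^2\to\R^5$, and its image lies in $X(z_0)+\operatorname{span}\{a,b\}$.

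\textbf{Step 2: use the null condition.} By Theorem \ref{thm:null} we have $\ip{c}{c}=0$ for the complex bilinear form. Separating real and imaginary parts,
\[
|a|^2-|b|^2+2i\,a\cdot b=0,
\]
so $|a|=|b|$ and $a\cdot b=0$.

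\textbf{Step 3: nondegeneracy.} The immersion condition in the definition requires $\Phi\neq 0$, so $c\neq 0$. By Step 2 this gives $|a|=|b|>0$, and $a,b$ are orthogonal, hence linearly independent. Therefore the image is exactly a $2$-dimensional affine plane through $X(z_0)$, parametrized conformally with constant factor $2|a|$. This agrees with the metric formula \eqref{metricgeneral}, whose conformal factor is the constant $2|c|^2$.

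The only genuine point is Step 3: it excludes the possibility that the image collapses to a line or a point. This is exactly where both the null condition and the regularity hypothesis $\Phi\neq0$ are used; every other step is a direct computation.
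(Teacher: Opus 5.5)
Your proof is correct and follows the paper's route: integrate the constant data to get $X = 2u\,\Re\Phi - 2v\,\Im\Phi$ plus a constant, which is an affine map. Your Step 3 adds something the paper's proof leaves implicit when it concludes that the image is a plane rather than a line or a point: you use the null condition and $\Phi\neq 0$ to show that $\Re\Phi$ and $\Im\Phi$ are orthogonal with equal nonzero length, hence linearly independent.
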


\begin{proof}
If $\Phi$ is constant, then
\[
X(z)=2\,\Re(\Phi z)+\text{constant}.
\]
Writing $z=u+iv$, we obtain
\[
X(u,v)=2u\,\Re(\Phi)-2v\,\Im(\Phi)+\text{constant},
\]
which is an affine linear map of the plane into $\mathbb{R}^5$. Hence its image is an affine plane.
\end{proof}

In the seed construction, this happens precisely when $\Psi'''$ is constant.

\begin{corollary}
If $\Psi$ is a polynomial of degree at most three, then the associated immersion is planar.
\end{corollary}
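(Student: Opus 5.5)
The plan is to reduce the statement to Proposition~\ref{prop:plane}, following the remark made just before the corollary that, in the seed construction, degeneration happens exactly when $\Psi'''$ is constant. First, write $\Psi(z)=a_0+a_1z+a_2z^2+a_3z^3$ with $a_j\in\C$. Then $\Psi'''\equiv 6a_3=:c$ is a constant on $D$. Next, substitute this into the seed data \eqref{seeddata}, so that \eqref{phiseed1}--\eqref{phiseed345} become
\[
\Phi(z)=c\Bigl(\tfrac12(1-\Lambda z^2),\ \tfrac{i}{2}(1+\Lambda z^2),\ z,\ \lambda_1 z,\ \lambda_2 z\Bigr).
\]
Then use Corollary~\ref{cor:integralfree} to write $X=2\,\Re F$ directly, with $F$ given by \eqref{F1}--\eqref{F5}. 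For the cubic seed this is the $m=3$ case of Theorem~\ref{thm:poly}, up to affine terms coming from $a_0,a_1,a_2$.

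The final step is to show that the null data is constant, so that Proposition~\ref{prop:plane} applies and gives $X(u,v)=2u\,\Re\Phi-2v\,\Im\Phi+\text{const}$.

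I expect this last step to be the main obstacle, and it needs genuine care. The factor $\Psi'''$ is constant, but the remaining vector in $\Phi$ still depends on $z$ through $g_1,g_2,g_3$. So Proposition~\ref{prop:plane} cannot be applied verbatim unless the whole of $\Phi$ is constant. The clean case is $c=0$, that is, $\deg\Psi\le 2$. Then $\Phi\equiv 0$, and Proposition~\ref{prop:plane} applies trivially: $X$ is constant, so its image is a degenerate affine subspace contained in any plane through that point.

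For $c\neq 0$ I would first try to verify the affine claim directly. The test is to compute the real span of the second derivatives of $X$, namely $2\Re\Phi'$ and $2\Im\Phi'$, and check whether they lie in the span of $X_u$ and $X_v$. A quick test with Theorem~\ref{thm:poly} at $m=3$ gives $F_1=3z-\Lambda z^3$ and $F_3=3z^2$. This strongly suggests that the image is not contained in a $2$-plane, because the monomials $z$, $z^2$ and $z^3$ are real-linearly independent. If that check confirms non-planarity, the argument as worded only goes through when $\Psi'''\equiv 0$. In that case I would record the corollary's content as covering $\deg\Psi\le 2$, with the cubic case handled as an explicit exception.
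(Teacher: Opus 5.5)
Your suspicion is right, and you can drop the hedge. The statement is false for every genuinely cubic seed, and the paper's own proof breaks at exactly the step you flagged.

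The paper argues that constancy of $\Psi'''$ makes the null curve \eqref{phiseed1}--\eqref{phiseed345} ``affine in $z$'' and then cites Proposition~\ref{prop:plane}. Both halves fail:
\begin{itemize}
\item For $\Psi'''\equiv 6a_3\neq 0$, the components $\phi_1,\phi_2$ are quadratic in $z$, not affine.
\item Proposition~\ref{prop:plane} needs $\Phi$ constant. Since $\phi_3=z\Psi'''$, this happens in the family only when $\Psi'''\equiv 0$.
\end{itemize}
So the sentence just before the corollary, which says $\Phi$ is constant precisely when $\Psi'''$ is constant, is also wrong.

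To make your non-planarity check rigorous, use the generalized Gauss map. A connected immersed surface lies in an affine $2$-plane if and only if the isotropic line $[X_z]$ is constant:
\begin{itemize}
\item If $X(D)\subset p+V$ with $\dim V=2$, then $X_z\in V\otimes\C$, which contains only two isotropic lines, so $[X_z]$ is constant by connectedness.
\item Conversely, if $X_z=h\,w$ with $w$ fixed, then $X_u,X_v\in\mathrm{span}_{\R}(\Re w,\Im w)$.
\end{itemize}
In the seed family, $[X_z]=\mathcal{G}$ is given by \eqref{GaussExplicit}, independently of $\Psi$. It is injective: in those homogeneous coordinates $w_1-iw_2=2$ and $w_3=2z$, so $w_3/(w_1-iw_2)=z$. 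Hence no member with $\Psi'''\not\equiv 0$ is planar, even on any open piece.

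Two independent confirmations:
\begin{itemize}
\item Theorem~\ref{thm:metric} gives $ds^2=36\abs{a_3}^2(1+\Lambda\abs{z}^2)^2|dz|^2$. Its Gauss curvature is $K=-\Lambda/\bigl(9\abs{a_3}^2(1+\Lambda\abs{z}^2)^4\bigr)$, which never vanishes, so the surface is not even intrinsically flat.
\item For $\Psi=z^3$, Theorem~\ref{thm:poly} with $m=3$ gives $X_1=6u-2\Lambda\Re z^3$, $X_2=-6v-2\Lambda\Im z^3$ and $(X_3,X_4,X_5)=6(u^2-v^2)(1,\lambda_1,\lambda_2)$. In the orthonormal frame $e_1,e_2,(0,0,1,\lambda_1,\lambda_2)/\sqrt{\Lambda}$, and with $w=\sqrt{\Lambda}\,z$, this is $6\Lambda^{-1/2}\bigl(\Re(w-w^3/3),\,-\Im(w+w^3/3),\,\Re w^2\bigr)$. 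That is a scaled Enneper surface.
\end{itemize}
A small correction to your setup: the coefficients $a_0,a_1,a_2$ contribute only a constant to $F$, not affine terms, since $F'=\Phi$ depends only on $\Psi'''$.

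What remains is the case $\deg\Psi\le 2$. There $\Phi\equiv 0$ and $X$ is constant, which by the paper's own definition is not an immersion. So the corollary has no true non-vacuous instance. The correct statement is the opposite one: no immersion in this family is planar. The first non-planar example already occurs at degree three, not degree four as the subsequent remark claims.
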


\begin{proof}
If $\deg\Psi\leq 3$, then $\Psi'''$ is constant, so the null curve \eqref{phiseed1}--\eqref{phiseed345} is affine in $z$. The corresponding immersion is therefore planar by Proposition \ref{prop:plane}.
\end{proof}

\begin{remark}
Thus the first genuinely nonlinear examples in the present family arise at degree four. This is one reason the quartic seed is a natural starting point for visualization and experimentation.
\end{remark}

\section{On the Impossibility of a Naive Quaternionic Reformulation}

In view of possible extensions of the construction, it is important to clarify why quaternionic variables cannot be introduced in a naive way into the present Weierstrass-type representation.

At first sight, one might attempt to replace the complex parameter
\[
z = u + iv
\]
by a quaternionic variable
\[
q = u + \mathbf{i}v + \mathbf{j}s + \mathbf{k}t,
\]
and then try to reproduce the same formulas using quaternion-valued functions. However, this approach is fundamentally incompatible with the structure of the representation developed in this paper. The obstruction is not technical but structural, and it arises from three independent sources.

\subsection*{(1) Failure of commutativity}

The Weierstrass-type formula in $\mathbb{R}^5$ relies crucially on algebraic identities such as
\[
\phi_1^2 + \phi_2^2 + \phi_3^2 + \phi_4^2 + \phi_5^2 = 0,
\]
which are verified using the commutativity of complex multiplication. For instance, expressions of the form
\[
(1-\sigma)^2 - (1+\sigma)^2 = -4\sigma
\]
are used repeatedly in the nullity computation.

In the quaternionic setting, multiplication is noncommutative:
\[
ab \neq ba \quad \text{in general}.
\]
As a consequence, even the basic expansion
\[
(1-\sigma)^2 = 1 - \sigma - \sigma + \sigma^2
\]
depends on the order of multiplication, and cancellations that are automatic over $\C$ no longer occur. In particular, the nullity condition is no longer preserved by the same algebraic mechanism.

\subsection*{(2) Loss of complex linearity and holomorphic structure}

The construction of minimal surfaces here depends on the fact that
\[
X(z) = \Re \int \Phi(z)\,dz
\]
is defined using holomorphic data. This uses several essential features of complex analysis:

\begin{itemize}
\item the Cauchy--Riemann equations,
\item the existence of local primitives for holomorphic functions,
\item path-independence of integrals on simply connected domains,
\item and the identity $X_z = \Phi$, $X_{\bar z}=0$.
\end{itemize}

These properties imply harmonicity and conformality.

In contrast, quaternionic analysis does not provide a direct analogue of this structure. There is no single notion of quaternionic holomorphicity that simultaneously preserves:

\begin{itemize}
\item linearity over the base field,
\item compatibility with multiplication,
\item and a simple primitive theory analogous to $\int f(z)\,dz$.
\end{itemize}

As a result, the identity
\[
X_z = \Phi, \qquad X_{\bar z} = 0
\]
has no meaningful quaternionic counterpart, and the standard proof of minimality breaks down.

\subsection*{(3) Incompatibility with the null curve model}

The entire construction is based on holomorphic null curves in $\C^5$, that is, maps satisfying
\[
\sum_{k=1}^5 \phi_k^2 = 0.
\]
This condition is quadratic and \emph{complex-bilinear}. It is preserved under complex scalar multiplication and behaves well under projectivization, leading to the quadric $Q_3 \subset \CP^4$.

Quaternionic multiplication does not preserve such quadratic complex-bilinear relations. In particular, there is no natural quaternionic analogue of the complex null cone
\[
\{ w \in \C^5 : \sum w_k^2 = 0 \}
\]
with the same algebraic and geometric properties. Consequently, the link between null curves, conformality, and minimality cannot be transferred to the quaternionic setting by a simple substitution.

\subsection*{Conclusion}

The Weierstrass-type representation used in this paper is inherently complex-analytic. Its validity depends on commutativity, holomorphicity, and the algebraic structure of the complex null cone. Replacing the complex variable by a quaternionic one breaks each of these features simultaneously.

For this reason, quaternionic methods - while extremely valuable in other areas of differential geometry and integrable systems - cannot be introduced into the present construction in a direct or formal way. Any meaningful quaternionic reformulation would require a fundamentally different framework, rather than a modification of the formulas presented here.

\section{A Cartan-Theoretic Frame and a Local DPW Interpretation}

The explicit formulas developed so far are useful in their own right, but they also fit into a broader geometric framework. In this section we explain that framework at a theoretical level. Our aim is not to establish a full loop-group theorem, but rather to show how the null-curve construction can be reinterpreted in terms of moving frames, the generalized Gauss map, and a local DPW-type reconstruction.

Let
\[
X:D\to \mathbb{R}^5
\]
be a conformal minimal immersion with induced metric
\[
ds^2=e^{2u}|dz|^2.
\]
Choose a local orthonormal tangent frame $(e_1,e_2)$ and a local orthonormal normal frame $(n_1,n_2,n_3)$. These vectors determine an adapted frame
\[
\mathcal{F}=(e_1,e_2,n_1,n_2,n_3):D\to SO(5).
\]
Its differential is encoded in the Maurer-Cartan form
\[
\alpha = \mathcal{F}^{-1}\dd\mathcal{F}\in\Omega^1(D,\mathfrak{so}(5)).
\]
Relative to the splitting of $\mathbb{R}^5$ into tangent and normal parts, this matrix-valued $1$-form has the block form
\begin{equation}\label{MCblock}
\alpha
=
\begin{pmatrix}
\omega & -\beta^{\,t}\\
\beta & \nu
\end{pmatrix}.
\end{equation}
Here $\omega$ is the Levi-Civita connection form of the tangent bundle, $\nu$ is the normal connection form, and $\beta$ encodes the second fundamental form.

The structure equations of the immersion are equivalent to the single Maurer-Cartan equation
\begin{equation}\label{MCeq}
\dd\alpha+\alpha\wedge\alpha=0.
\end{equation}
When the block form \eqref{MCblock} is substituted into \eqref{MCeq}, one recovers the Gauss, Codazzi, and Ricci equations.

To bring this into closer contact with the holomorphic formalism, we pass to complex notation. Define
\[
\varepsilon=e_1-i e_2.
\]
Since $X$ is conformal and the metric is $e^{2u}|dz|^2$, one has
\[
X_z=\frac{e^u}{2}\varepsilon.
\]
The vector $\varepsilon$ is complex isotropic in the sense that
\[
\ip{\varepsilon}{\varepsilon}=0,
\qquad
\ip{\varepsilon}{\overline{\varepsilon}}=2.
\]
Differentiating once more gives a decomposition
\begin{equation}\label{Hopfdecomp}
X_{zz}=2u_z X_z+\Omega,
\end{equation}
where $\Omega$ is a section of the complexified normal bundle $N\Sigma\otimes\C$. This normal-valued quadratic differential is the higher-codimensional analogue of the classical Hopf differential. Minimality is expressed by the condition
\[
X_{z\bar z}=0.
\]

Now recall that in our Weierstrass representation we have
\[
X_z=\Phi.
\]
Thus the null curve itself is a holomorphic lift of the complex tangent direction. The complex line generated by $\Phi(z)$ determines the generalized Gauss map \eqref{Gdef},
\[
\mathcal{G}:D\to Q_3\subset\CP^4.
\]
Because $Q_3$ is a complex quadric, it may be viewed as the complex-analytic model of the set of isotropic lines in $\C^5$.

There is also a real-geometric interpretation. The oriented tangent plane of the immersion defines a map into the Grassmannian of oriented $2$-planes in $\mathbb{R}^5$, which is the compact symmetric space
\[
\widetilde{G}_2(\mathbb{R}^5) \cong \frac{SO(5)}{SO(2)\times SO(3)}.
\]
Accordingly, the Lie algebra $\mathfrak{so}(5)$ admits a Cartan decomposition
\[
\mathfrak{so}(5)=\mathfrak{k}\oplus\mathfrak{p},
\]
where $\mathfrak{k}\cong\mathfrak{so}(2)\oplus\mathfrak{so}(3)$ is the Lie algebra of the isotropy subgroup. Splitting the Maurer-Cartan form into its $\mathfrak{k}$- and $\mathfrak{p}$-components gives
\[
\alpha=\alpha_{\mathfrak{k}}+\alpha_{\mathfrak{p}}.
\]
After complexification and decomposition into $(1,0)$- and $(0,1)$-parts, one obtains
\[
\alpha_{\mathfrak{p}}
=
\alpha_{\mathfrak{p}}'+\alpha_{\mathfrak{p}}''.
\]

As in the standard theory of harmonic maps into symmetric spaces, one then introduces the associated family of connections
\begin{equation}\label{extendedconnection}
\alpha_\lambda
=
\alpha_{\mathfrak{k}}
+
\lambda^{-1}\alpha_{\mathfrak{p}}'
+
\lambda\,\alpha_{\mathfrak{p}}'',
\qquad
\lambda\in\C^\times.
\end{equation}
The key point is that flatness of this family is equivalent to harmonicity of the Gauss map.

\begin{proposition}\label{prop:flatfamily}
If the Gauss map of the immersion is harmonic as a map into
\[
SO(5)/(SO(2)\times SO(3)),
\]
then for every $\lambda\in\C^\times$ the connection $\dd+\alpha_\lambda$ is flat.
\end{proposition}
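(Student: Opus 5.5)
The proof is the standard symmetric-space computation. I expand the curvature of $\dd+\alpha_\lambda$ in powers of $\lambda$ and check that each coefficient vanishes, using the Maurer--Cartan equation \eqref{MCeq}, the bracket relations of the Cartan decomposition, and harmonicity of the Gauss map.

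\textbf{Step 1: algebraic setup.} The decomposition $\mathfrak{so}(5)=\mathfrak{k}\oplus\mathfrak{p}$ is the eigenspace splitting of the involution $\operatorname{Ad}(S)$ with $S=\mathrm{diag}(1,1,-1,-1,-1)$. It therefore satisfies
\[
[\mathfrak{k},\mathfrak{k}]\subset\mathfrak{k},\qquad
[\mathfrak{k},\mathfrak{p}]\subset\mathfrak{p},\qquad
[\mathfrak{p},\mathfrak{p}]\subset\mathfrak{k}.
\]
In the block form \eqref{MCblock}, $\alpha_{\mathfrak{k}}$ is the block-diagonal part $(\omega,\nu)$ and $\alpha_{\mathfrak{p}}$ is the off-diagonal part built from $\beta$. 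Since $\mathcal F$ projects to the Gauss map into $SO(5)/(SO(2)\times SO(3))$, $\alpha_{\mathfrak p}$ is the pullback of the Maurer--Cartan form's $\mathfrak p$-part. The $(1,0)$-part $\alpha_{\mathfrak p}'$ is then the $\partial$-derivative of the Gauss map expressed in the frame.

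\textbf{Step 2: split \eqref{MCeq}.} Write $\alpha\wedge\alpha=\tfrac12[\alpha\wedge\alpha]$ and project the Maurer--Cartan equation onto $\mathfrak k$ and $\mathfrak p$:
\begin{align*}
&\dd\alpha_{\mathfrak k}+\tfrac12[\alpha_{\mathfrak k}\wedge\alpha_{\mathfrak k}]+\tfrac12[\alpha_{\mathfrak p}\wedge\alpha_{\mathfrak p}]=0,\\
&\dd\alpha_{\mathfrak p}+[\alpha_{\mathfrak k}\wedge\alpha_{\mathfrak p}]=0.
\end{align*}
On a Riemann surface every $2$-form is of type $(1,1)$, so $[\alpha_{\mathfrak p}'\wedge\alpha_{\mathfrak p}']=0=[\alpha_{\mathfrak p}''\wedge\alpha_{\mathfrak p}'']$. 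Hence $\tfrac12[\alpha_{\mathfrak p}\wedge\alpha_{\mathfrak p}]=[\alpha_{\mathfrak p}'\wedge\alpha_{\mathfrak p}'']$.

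\textbf{Step 3: the harmonicity input.} The harmonic map equation for the Gauss map is
\[
\dd\alpha_{\mathfrak p}'+[\alpha_{\mathfrak k}\wedge\alpha_{\mathfrak p}']=0,
\]
equivalently $\bar\partial^{\nabla}\alpha_{\mathfrak p}'=0$, i.e.\ $\dd^{*}\alpha_{\mathfrak p}=0$ in the presence of the second equation of Step 2. Subtracting it from the $\mathfrak p$-equation gives the same identity for $\alpha_{\mathfrak p}''$.

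\textbf{Step 4: expand the curvature.} The curvature of $\dd+\alpha_\lambda$ is
\[
\dd\alpha_\lambda+\tfrac12[\alpha_\lambda\wedge\alpha_\lambda]
=\Bigl(\dd\alpha_{\mathfrak k}+\tfrac12[\alpha_{\mathfrak k}\wedge\alpha_{\mathfrak k}]+[\alpha_{\mathfrak p}'\wedge\alpha_{\mathfrak p}'']\Bigr)
+\lambda^{-1}\bigl(\dd\alpha_{\mathfrak p}'+[\alpha_{\mathfrak k}\wedge\alpha_{\mathfrak p}']\bigr)
+\lambda\bigl(\dd\alpha_{\mathfrak p}''+[\alpha_{\mathfrak k}\wedge\alpha_{\mathfrak p}'']\bigr).
\]
In the $\lambda^0$ term, the cross terms $\lambda^{-1}\cdot\lambda$ produce exactly $[\alpha_{\mathfrak p}'\wedge\alpha_{\mathfrak p}'']$, which equals $\tfrac12[\alpha_{\mathfrak p}\wedge\alpha_{\mathfrak p}]$ by Step 2. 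The terms with $\lambda^{\pm2}$ vanish by the type argument of Step 2. So the $\lambda^0$ coefficient is zero by the $\mathfrak k$-equation, and the $\lambda^{\mp1}$ coefficients are zero by Step 3. This holds for every $\lambda\in\C^\times$.

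The main obstacle is Step 3: justifying that harmonicity of the Gauss map is exactly $\dd\alpha_{\mathfrak p}'+[\alpha_{\mathfrak k}\wedge\alpha_{\mathfrak p}']=0$. I would take this from the standard theory of harmonic maps into symmetric spaces in \cite{PressleySegal} rather than rederive it. This identification also depends on the gauge choice of the frame $\mathcal F$, which changes $\alpha$ by $\operatorname{Ad}(K)$ and preserves the splitting. For the immersions of this paper, an alternative consistency check is available: the Gauss map of a minimal surface is harmonic (Ruh--Vilms), and \eqref{Hopfdecomp} together with $X_{z\bar z}=0$ gives the Codazzi equation. That equation is precisely the $\mathfrak p$-component written in terms of $\Omega$.
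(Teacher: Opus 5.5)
Your proposal is correct and follows the paper's argument: substitute $\alpha_\lambda$ into $\dd\alpha_\lambda+\alpha_\lambda\wedge\alpha_\lambda=0$, then match the $\lambda^{0}$ coefficient with the $\mathfrak{k}$-part of \eqref{MCeq} and the $\lambda^{\mp1}$ coefficients with the harmonic map equation, which the $\mathfrak{p}$-part of \eqref{MCeq} splits into its $(1,0)$ and $(0,1)$ pieces. The paper only cites this as the standard zero-curvature formulation. You additionally spell out the bracket relations and the type argument that removes the $\lambda^{\pm2}$ terms.
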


\begin{proof}
This is the standard zero-curvature formulation for harmonic maps into symmetric spaces. Substituting \eqref{extendedconnection} into the flatness condition
\[
\dd\alpha_\lambda+\alpha_\lambda\wedge\alpha_\lambda=0
\]
and comparing coefficients of the powers of $\lambda$ gives exactly the harmonic map equations for the symmetric-space-valued Gauss map, together with the Maurer-Cartan equation for $\alpha$. Conversely, the harmonic map equations imply flatness of the family.
\end{proof}

In our setting, the generalized Gauss map is holomorphic into the complex quadric $Q_3$ by Proposition \ref{prop:gaussmap}. Since holomorphic maps into K\"ahler manifolds are harmonic, the associated family \eqref{extendedconnection} is flat. This is the starting point for a local DPW-type description.

The idea of the DPW method is to begin not with the immersion itself, but with a meromorphic potential. In the present context one expects a local potential of the form
\begin{equation}\label{potential}
\xi = \lambda^{-1}\eta(z)\,\dd z,
\end{equation}
where $\eta(z)$ takes values in the complexified $\mathfrak{p}$-part of the Cartan decomposition. One then solves
\begin{equation}\label{ODE}
\dd C = C\,\xi.
\end{equation}
Locally, and away from singularities, one seeks a factorization
\begin{equation}\label{Iwasawa}
C(z,\lambda)=\mathcal{F}(z,\lambda)\,V_+(z,\lambda),
\end{equation}
where $\mathcal{F}(z,\lambda)$ takes values in the appropriate real loop group and $V_+(z,\lambda)$ belongs to the positive loop subgroup. The Maurer-Cartan form of $\mathcal{F}$ then reproduces the associated family \eqref{extendedconnection}.

What remains is to reconstruct the immersion from the Gauss data. This is where the null-curve formulation becomes especially transparent. The null curve $\Phi$ may be written as
\begin{equation}\label{Phimu}
\Phi=\mu\,\nu,
\end{equation}
where $\nu$ is a local lift of the projective Gauss map and $\mu$ is a scalar holomorphic differential. In our explicit family one may take
\[
\mu=\frac{\Psi'''(z)}{2}\,\dd z
\]
and
\[
\nu=
\left(
1-\Lambda z^2,\,
i(1+\Lambda z^2),\,
2z,\,
2\lambda_1 z,\,
2\lambda_2 z
\right).
\]
The immersion is then recovered by
\[
X(z)=2\,\Re\int_{z_0}^{z}\Phi
=
2\,\Re\int_{z_0}^{z}\mu\,\nu.
\]
Thus the geometric data naturally splits into a projective part, represented by the generalized Gauss map, and a scalar part, represented by the height differential $\mu$.

This discussion may be summarized as follows.

\begin{theorem}[Local DPW-type reconstruction principle]
Locally on a simply connected domain, a conformal minimal immersion in $\mathbb{R}^5$ determines a holomorphic generalized Gauss map into the quadric $Q_3$, hence a harmonic map into the symmetric space
\[
SO(5)/(SO(2)\times SO(3)),
\]
together with a flat family of connections of the form \eqref{extendedconnection}. Conversely, a local meromorphic potential producing such a flat family, together with a holomorphic scalar differential, determines the immersion through the reconstruction formula
\[
X(z)=2\,\Re\int_{z_0}^{z}\mu\,\nu.
\]
\end{theorem}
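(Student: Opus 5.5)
The proof splits into a forward direction and a converse, and in both I will lean on the earlier results rather than redo any loop-group analysis.

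\emph{Forward direction.} Given a conformal minimal immersion $X:D\to\mathbb{R}^5$ on a simply connected domain, set $\Phi=X_z$.
\begin{itemize}
\item Minimality, $X_{z\bar z}=0$, says that $\Phi$ is holomorphic.
\item Conformality says $\ip{\Phi}{\Phi}=0$, i.e.\ $\Phi$ is a holomorphic null curve.
\item The immersion condition gives $\Phi\neq 0$ everywhere.
\end{itemize}
Hence $\mathcal{G}=[\Phi]$ is a well-defined map into $Q_3$, and it is holomorphic by the argument of Proposition~\ref{prop:gaussmap}, which only uses holomorphicity and nullity of $\Phi$.

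Next I will identify $Q_3$ with $\widetilde{G}_2(\mathbb{R}^5)$. The isotropic line $[\varepsilon]=[e_1-ie_2]$ corresponds to the oriented plane $\mathrm{span}(e_1,e_2)$, since $\varepsilon$ is isotropic with $\ip{\varepsilon}{\overline\varepsilon}=2$. This bijection is $SO(5)$-equivariant, and the complex structure on $Q_3$ is the invariant Kähler structure of the Hermitian symmetric space $SO(5)/(SO(2)\times SO(3))$. Because $\mathcal{G}$ is holomorphic into a Kähler manifold, it is harmonic. Proposition~\ref{prop:flatfamily} then gives flatness of $d+\alpha_\lambda$ for every $\lambda\in\C^\times$.

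\emph{Converse.} Start from a potential $\xi=\lambda^{-1}\eta\,dz$, solve \eqref{ODE}, and Iwasawa-factorize as in \eqref{Iwasawa}, locally near a point where the factorization exists. This produces a frame $\mathcal{F}$ whose Maurer--Cartan form has the form \eqref{extendedconnection}, and hence a harmonic map into the Grassmannian.

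To obtain an immersion, read off a local holomorphic lift $\nu$ of the isotropic line; projectively, the first column pair $e_1-ie_2$ of the frame at $\lambda=1$ gives the $(1,0)$ data. Fix a holomorphic scalar differential $\mu$ and set $\Phi=\mu\nu$. Then:
\begin{itemize}
\item $\Phi$ is null, because $\nu$ is isotropic;
\item $\Phi$ is holomorphic, provided $\nu$ is chosen holomorphic;
\item $\Phi\neq 0$ wherever $\mu$ and $\nu$ do not vanish.
\end{itemize}
Theorem~\ref{thm:minimal} now shows that $X=2\Re\int\Phi$ is a conformal minimal immersion on its regular set, and $X_z=\Phi$ recovers the data. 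Simple connectivity of $D$ guarantees that the integral is well defined, exactly as in \eqref{Xdef}.

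\emph{Main obstacle.} The hard part of the converse is ensuring that the lift $\nu$ extracted from the DPW frame is actually holomorphic. In general, DPW data yield a harmonic, not holomorphic, Gauss map, so some restriction on $\eta$ is needed. My plan is to require that $\eta$ take values in the abelian nilpotent part of $\mathfrak{p}^{\C}$ corresponding to the $(1,0)$ direction of the Hermitian structure. Then $C$ is essentially $\exp(\lambda^{-1}\int\eta)$, which is polynomial in $\lambda^{-1}$, so the relevant column of $C$ is holomorphic in $z$. The projective line it spans is unchanged by the $V_+$ factor modulo $K$-gauge.

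I expect to state this restriction explicitly and interpret ``potential producing such a flat family'' in that sense. This also matches the forward direction, where $\nu=\Phi/\mu$ is holomorphic by construction. As a consistency check, the explicit family of \eqref{Phimu} fits this picture: there $\nu$ is quadratic in $z$, which is exactly the behaviour of the exponential of a nilpotent potential.
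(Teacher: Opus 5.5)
Your proposal follows the same route as the paper's proof. The forward direction goes null curve $\Rightarrow$ holomorphic Gauss map into $Q_3$ $\Rightarrow$ harmonic $\Rightarrow$ flat family via Proposition~\ref{prop:flatfamily}. The converse goes potential $\Rightarrow$ Iwasawa frame $\Rightarrow$ lift $\nu$ of the isotropic line $\Rightarrow$ $\Phi=\mu\nu$, fed into Theorem~\ref{thm:minimal}.

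It is also more careful than the paper, which only speaks of ``a suitable potential''. You are right that a general DPW potential yields a harmonic but not necessarily holomorphic Gauss map, in which case $\mu\nu$ need not even be closed. Restricting $\eta$ to the abelian nilpotent subspace $\mathfrak{p}^{+}$ does fix this: the Iwasawa factor then has the form $V_+=k\exp(\lambda N)$ with $k\in K^{\mathbb{C}}$ and $N\in\mathfrak{p}^{-}$, which fixes the base isotropic line. Hence the frame's line coincides with that of the holomorphic vector $\exp(\lambda^{-1}\!\int\eta)\,\varepsilon$, which confirms the claim you state without proof.
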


\begin{proof}
The forward direction follows from the preceding paragraphs: the null curve determines the holomorphic map into $Q_3$, holomorphicity implies harmonicity, harmonicity yields the flat associated family, and \eqref{Phimu} gives the reconstruction. The converse direction is the local content of the loop-group method: a suitable potential yields an extended frame, the frame determines the harmonic Gauss map, and the additional scalar differential reconstructs the null curve and hence the immersion. Since we are working locally, no monodromy issues arise.
\end{proof}

\begin{remark}
We stress that the purpose of this section is conceptual rather than technical. A complete global DPW theory for higher-codimensional minimal immersions requires additional analytic and loop-group input. What we have shown here is that the explicit formulas derived earlier are not isolated algebraic accidents; they fit naturally into the standard Cartan-harmonic-map framework.
\end{remark}

\subsection*{What the DPW viewpoint adds in the present setting}

At this stage it is useful to explain more explicitly what is, and what is not, meant by the DPW viewpoint in the present paper. The classical Dorfmeister-Pedit-Wu method was developed as a loop-group construction for harmonic maps into symmetric spaces and, through them, for several classes of geometrically significant surfaces. In its full form, the method begins with a meromorphic potential, solves a matrix differential equation, performs an Iwasawa or Birkhoff factorization, and then reconstructs the geometric object from the resulting extended frame.

Our discussion above does not attempt to reproduce that full theory in complete analytic generality. Instead, it identifies the precise point at which the explicit $\mathbb{R}^5$ Weierstrass data fits into the DPW framework. The key observation is that the generalized Gauss map of our immersion is holomorphic into the quadric $Q_3$, and hence harmonic when viewed as a map into the corresponding symmetric-space model. This harmonicity is exactly what permits the introduction of the associated family of flat connections \eqref{extendedconnection}. Once that family is present, the logic of the DPW method becomes available at the local level.

What makes the present situation especially transparent is that the null curve already provides a concrete factorization of the geometric data. Indeed, the formula
\[
\Phi=\mu\,\nu
\]
separates the construction into two pieces. The vector $\nu$ determines the projective direction of the isotropic tangent line and hence the generalized Gauss map, while the scalar differential $\mu$ measures the size of the null curve and is responsible for the actual immersion after integration. In other words, the projective part of the theory belongs to the harmonic-map side, while the scalar differential plays the role of the height data needed to pass from a Gauss map to a surface.

This makes it possible to interpret the explicit formulas of Sections 2--5 as a particularly tractable class of DPW-type data. In the family generated from a single seed function $\Psi$, the generalized Gauss map is already available in closed form through \eqref{GaussExplicit}, and the scalar differential is simply
\[
\mu=\frac{\Psi'''(z)}{2}\,\dd z.
\]
Thus no abstract existence argument is needed to identify the potential geometric ingredients: they are already visible directly in the formulas. From this point of view, the seed function controls the conformal scaling, while the quadric-valued map records the isotropic direction field.

There is, however, an important distinction between this observation and a complete DPW theorem. A full loop-group treatment would require a precise choice of real form, a careful discussion of the relevant loop groups, local and global factorization issues, and an analysis of singularities and monodromy. None of these technical points is needed for the explicit goals of the present paper, and we have therefore deliberately avoided burdening the exposition with them. Our purpose is not to replace the concrete Weierstrass formulas by an abstract machine, but rather to show that those formulas sit naturally inside a broader integrable-systems picture.

For that reason, the role of the DPW viewpoint here is primarily explanatory. It clarifies why the generalized Gauss map, the frame equations, and the null-curve representation are not unrelated constructions, but different manifestations of the same underlying geometry. It also explains why the family introduced from the seed function $\Psi$ is more than an ad hoc collection of examples: it can be regarded as a concrete slice of a larger harmonic-map and loop-group framework. In this sense, the DPW discussion closes the present study by connecting the explicit formulas back to the structural theory from which they ultimately derive.

\section{Concluding Remarks}

We have presented an explicit and local framework for conformal minimal immersions in $\mathbb{R}^5$ based on holomorphic null curves in $\C^5$. The five-dimensional setting is rich enough to display genuinely higher-codimensional behavior, but still structured enough that one can carry out the relevant computations completely and explicitly.

The first part of the paper developed a Weierstrass-type formula and then specialized it to a family determined by a single holomorphic seed function and two real parameters. The resulting integral-free parametrization makes it possible to write the immersion, the metric, and the generalized Gauss map in closed form. This gives a concrete class of examples that can be handled directly, without leaving the level of elementary holomorphic computations.

The polynomial case illustrates the usefulness of this explicit approach. Even very simple seeds produce coordinates with several competing powers of the complex variable, and this leads to nontrivial geometry and interesting projected images. At the same time, the formulas remain transparent enough that a reader can verify each step and adapt the construction to further examples.

The final section showed that these explicit formulas admit a broader geometric interpretation. The moving-frame viewpoint, the generalized Gauss map into the quadric, and the associated family of flat connections place the entire construction within the standard language of Cartan theory and integrable systems. In this way the paper connects explicit computation with conceptual structure.

Several questions naturally remain open. One may choose more general functions $g_1,g_2,g_3$, examine global period conditions, study completeness and singularities, or pursue a fuller loop-group treatment. These directions lie beyond the local and formula-driven aims of the present work. Our hope is that the paper provides a clear and useful entry point into minimal surface geometry in higher codimension, and especially into the concrete study of explicit examples in $\mathbb{R}^5$.

\section*{Acknowledgements}

The authors are grateful to the classical literature on minimal surfaces and higher-codimensional surface theory, which continues to provide both inspiration and conceptual guidance for explicit


\textbf{Magdalena Toda}\\
Department of Mathematics and Statistics, Texas Tech University,\\
Lubbock, TX 79409, USA\\
\textit{Email:} \texttt{magda.toda@ttu.edu}\\[0.1em]

\textbf{Erhan Güler}\\
Department of Mathematics and Statistics, Texas Tech University,\\
Lubbock, TX 79409, USA\\
\textit{Email:} \texttt{eguler@ttu.edu}

\end{document}